\newtheorem{thm}{Theorem}[section]
\newtheorem{lem}[thm]{Lemma}
\newtheorem{rem}[thm]{Remark}
\newtheorem{prop}[thm]{Proposition}
\numberwithin{equation}{section}
\begin{document}

\baselineskip=1.4em

\title{\textbf{Approximate functional equation and mean value formula for the derivatives of \textit{L}-functions attached to cusp forms}}
\author{Yoshikatsu Yashiro\\
\small Graduate School of Mathematics, Nagoya University,\\[-0.4em] 
\small 464-8602 \ Chikusa-ku, Nagoya, Japan \\[-0.4em] 
\small E-mail: m09050b@math.nagoya-u.ac.jp
}
\date{}
%\date{\today}
%\date{(\textbf{2nd draft})}
\maketitle

\renewcommand{\thefootnote}{}
\footnote{2010 \emph{Mathematics Subject Classification}: Primary 11M99; Secondary 11N99.}
\footnote{\emph{Key words and phrases}: cusp forms, $L$-functions, derivative, approximate functional equation, mean value formula.}

\vspace{-3em}

\begin{abstract}
Let $f$ be a holomorphic cusp form of weight $k$ with respect to the full modular group $SL_2(\mathbb{Z})$. We suppose that $f$ is a normalized Hecke eigenform. Let $L_f(s)$ be the $L$-function attached to the form $f$. 
Good gave the approximate functional equation and mean square formula of $L_f(s)$. 
In this paper, we shall generalize these formulas for the derivatives of $L_f(s)$. 
\end{abstract}

%\subjclass[2010]{Primary 11M99; Secondary 11N99}

%\keywords{cusp forms, $L$-functions, derivative, approximate functional equation, mean value formula.}

\maketitle

\section{Introduction}
Let $S_k$ be the space of cusp forms of even weight $k\in\mathbb{Z}_{\geq12}$ with respect to the full modular group $SL_2(\mathbb{Z})$. Let $f\in S_k$ be a normalized Hecke eigenform, and $a_f(n)$ the $n$-th Fourier coefficient of $f$. Set $\lambda_f(n)=a_f(n)/n^{(k-1)/2}$. 
The $L$-function attached to $f$ is defined by
\begin{align}
L_f(s)=\sum_{n=1}^\infty\frac{\lambda_f(n)}{n^s}=\prod_{p{\rm:prime}}\left(1-\frac{\alpha_f(p)}{p^s}\right)^{-1}\left(1-\frac{\beta_f(p)}{p^s}\right)^{-1} \quad (\text{Re }s>1), \label{3LD}
\end{align}
where $\alpha_f(p)$ and $\beta_f(p)$ satisfy $\alpha_f(p)+\beta_f(p)=\lambda_f(p)$ and $\alpha_f(p)\beta_f(p)=1$. Then it is well-known that the function $L_f(s)$ is analytically continued to the whole $s$-plane by
\begin{align}
(2\pi)^{-s-\frac{k-1}{2}}\Gamma(s+\tfrac{k-1}{2})L_f(s)=\int_0^\infty f(iy)y^{s+\frac{k-1}{2}-1}dy, \label{3AC}
\end{align}
and has a functional equation 
\begin{align*}
L_f(s)=\chi_f(s)L_f(1-s)
\end{align*}
where $\chi_f(s)$ is given by
\begin{align}
\chi_f(s)=&(-1)^{\frac{k}{2}}(2\pi)^{2s-1}\frac{\Gamma(1-s+\frac{k-1}{2})}{\Gamma(s+\frac{k-1}{2})} \label{XFE} \\
=&(-1)^{\frac{k}{2}}(2\pi)^{2\sigma-1}|t|^{1-2\sigma}e^{i\left(\frac{\pi}{2}(1-k){\rm sgn}(t)-2t\log\frac{|t|}{2\pi e}\right)}(1+O(|t|^{-1})) \label{STR}
\end{align}
where ${\rm sgn}(t)$ is defined by ${\rm sgn}(t)=1$ for $t\in\mathbb{R}_{>0}$ and ${\rm sgn}(t)=-1$ for $t\in\mathbb{R}_{<0}$, and (\ref{STR}) is obtained by Stirling's formula (see \cite[(19)]{GD1}).

Good \cite{GD1} gave the approximate functional equation for $L_f(s)$:
\begin{align*}
L_f(\sigma+it)=\sum_{n\leq x}\frac{\lambda_f(n)}{n^{s}}+\chi_f(s)\sum_{n\leq y}\frac{\lambda_f(n)}{n^{1-s}}+O(|t|^{\frac{1}{2}-\sigma+\varepsilon})
\end{align*}
where $\varepsilon\in\mathbb{R}_{>0}$, $s=\sigma+it$ such that $\sigma\in[0,1]$ and $|t|\gg 1$, and $x, y\in\mathbb{R}_{>0}$ satisfying $(2\pi)^2xy=|t|^2$. The feature of his proof of this equation is to introduce characteristic function and use the residue theorem. Moreover, he gave the mean square formula for $L_f(s)$ using the above equation:
\begin{align}
\int_1^T|L_f(\sigma+it)|^2dt=\begin{cases} A_{f}T\log T+O(T), & \sigma= 1/2, \\ \displaystyle T\sum_{n=1}^\infty\frac{|\lambda_f(n)|^2}{n^{2\sigma}}+O(T^{2(1-\sigma)}), & 1/2<\sigma<1, \\ \displaystyle T\sum_{n=1}^\infty\frac{|\lambda_f(n)|^2}{n^{2\sigma}}+O(\log ^2T), & \sigma=1,  \end{cases} \label{MVF}
\end{align}  
where $A_{f}$ is a positive constant depending on $f$.

Let $\zeta(s)$ be the Riemann zeta function and $\zeta'(s)$ be its first derivative. Since Speiser \cite{SPE} proved that the Riemann Hypothesis (for short RH) is equivalent to the non-existence of zeros of $\zeta'(s)$ in $0<\text{Re }s<1/2$, zeros of $\zeta'(s)$ have been interested
by many researchers. Recently Aoki and Minamide \cite{A&M} studied the density of zeros of $\zeta^{(m)}(s)$ in the right hand side of critical line ${\rm Re\;}s=1/2$ by using Littlewood's method. 
However there is no result concerning zeros of derivatives of $L$-functions attached to cusp forms.
The $m$-th derivative of $L_f(s)$ is given by
\begin{align*}
L_f^{(m)}(s)=\sum_{n=1}^\infty\frac{\lambda_f(n)(-\log n)^m}{n^s} \quad (\text{Re }s>1).
\end{align*}
Differentiating both sides of (\ref{3AC}), we find 
\begin{align}
L^{(m)}_f(s)=\sum_{r=0}^m\binom{m}{r}(-1)^{r}\chi_f^{(m-r)}(s)L_f^{(r)}(1-s). \label{3DFE}
\end{align}
In this paper, we shall show the approximate functional equation and the mean value formula for $L_f^{(m)}(s)$ for the purpose of studying the zero-density for $L_f^{(m)}(s)$.

Following \cite{GD1}, we shall introduce characteristic functions. Let $\varphi$ be the real valued $C^\infty$ function on $[0,\infty)$ satisfying $\varphi(\rho)=1$ for $\rho\in[0,1/2]$ and $\varphi(\rho)=0$ for $\rho\in[2,\infty)$. Let $\mathcal{R}$ be the set of these characteristic functions $\varphi$. Write $\varphi_0(\rho)=1-\varphi(1/\rho)$. It is clear to show that if $\varphi\in\mathcal{R}$ then $\varphi_0\in\mathcal{R}$. Let $\varphi^{(j)}$ be the $j$-th derivative function of $\varphi\in\mathcal{R}$. Then $\varphi^{(j)}$ becomes absolutely integrable function on $[0,\infty)$. Let $\|\varphi^{(j)}\|_1$ be $L_1$-norm of $\varphi^{(j)}$, that is, $\|\varphi^{(j)}\|_1=\int_0^\infty|\varphi^{(j)}(\rho)|d\rho$.
For $r\in\{0,\dots,m\}$, $j\in\mathbb{Z}_{\geq0}$, $\rho\in\mathbb{R}_{>0}$ and $s=\sigma+it$ such that $|t|\gg1$, let $\gamma_{j}^{(r)}(s,\rho)$ be
\begin{align*}
\gamma_{j}^{(r)}(s,\rho)&=\frac{1}{2\pi i}\int_{\mathcal{F}}\frac{(\chi_f^{(r)}/\chi_f)(1-s-w)}{w(w+1)\cdots(w+j)}\frac{\Gamma(s+w+\frac{k-1}{2})}{\Gamma(s+\frac{k-1}{2})}(\rho e^{-i\frac{\pi}{2}{\rm sgn}(t)})^wdw
\end{align*}
where $\mathcal{F}$ is given by $\mathcal{F}=\{-1/2-\sigma+\sqrt{|t|}e^{i\pi\theta}\mid\theta\in(1/2,3/2)\}\cup\{3/2-\sigma+\sqrt{|t|}e^{i\pi\theta}\mid\theta\in(-1/2,1/2)\}\cup\{u\pm\sqrt{|t|}\mid u\in[-1/2-\sigma,3/2-\sigma]\}$.  

Then using (\ref{3DFE}) and the approximate formula for $\chi_f^{(r)}(s)$ as $|t|\to\infty$ where $r\in\{0,\dots,m\}$, we obtain the approximate functional equation for $L_f^{(m)}(s)$ with characteristic functions:
\begin{thm}\label{ATH1}
For any $m\in\mathbb{Z}_{\geq0}$, $l\in\mathbb{Z}_{\geq(k+1)/2}$, $\varphi\in\mathcal{R}$, $s=\sigma+it$ such that $\sigma\in[0,1]$ and $|t|\gg 1$,  and $y_1,y_2\in\mathbb{R}_{>0}$ satisfying $(2\pi)^2y_1y_2=|t|^2$, we have
\begin{align}
L_f^{(m)}(s)=&\sum_{n=1}^\infty\frac{\lambda_f(n)(-\log n)^m}{n^s}\varphi\left(\frac{n}{y_1}\right)+\notag\\
&+\sum_{r=0}^m(-1)^{r}\binom{m}{r}\chi_f^{(m-r)}(s)\sum_{n=1}^\infty\frac{\lambda_f(n)(-\log n)^{r}}{n^{1-s}}\varphi_0\left(\frac{n}{y_2}\right)+R_\varphi(s), \label{AA1}
\end{align}
where $R_\varphi(s)$ is given by
\begin{align*}
R_\varphi(s)=&\sum_{n=1}^\infty\frac{\lambda_f(n)(-\log n)^m}{n^s}\sum_{j=1}^l\varphi^{(j)}\left(\frac{n}{y_1}\right)\left(-\frac{n}{y_1}\right)^{j}\gamma_{j}^{(0)}\left(s,\frac{1}{|t|}\right)+\\
&+\chi_f(s)\sum_{r=0}^m(-1)^j\binom{m}{r}\sum_{n=1}^\infty\frac{\lambda_f(n)(-\log n)^r}{n^{1-s}}
\times\\
&\times
\sum_{j=1}^l\varphi_0^{(j)}\left(\frac{n}{y_2}\right)\left(-\frac{n}{y_2}\right)^{j}\gamma_{j}^{(m-r)}\left(1-s,\frac{1}{|t|}\right)+\\
&+O\left(y_1^{1-\sigma}(\log y_1)^m|t|^{-\frac{l}{2}}\|\varphi^{(l+1)}\|_1\right)+\\
&+O\left(y_2^{\sigma}\left({\displaystyle\sum\limits_{r=0}^m}(\log y_2)^r(\log|t|)^{m-r}\right)|t|^{1-2\sigma-\frac{l}{2}}\|\varphi_0^{(l+1)}\|_1\right).
\end{align*}
\end{thm}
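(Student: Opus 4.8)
The plan is to follow Good's strategy \cite{GD1}: represent each smoothed sum by a Mellin--Barnes integral carrying the archimedean factor $\Gamma(s+w+\frac{k-1}{2})/\Gamma(s+\frac{k-1}{2})$, transport the complementary range across the critical strip by the derivative functional equation (\ref{3DFE}), and peel off the dual sum together with the correction kernels $\gamma_j^{(r)}$ by expanding this archimedean factor through Stirling's formula and organising the expansion by repeated integration by parts. Throughout I would keep $s=\sigma+it$ with $\sigma\in[0,1]$, $|t|\gg1$, use $(2\pi)^2y_1y_2=|t|^2$, and rely on the Deligne bound $\lambda_f(n)\ll_\varepsilon n^\varepsilon$ to control the arithmetic sums.

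First I would record the analytic input on $\varphi$. The Mellin transform $\tilde\varphi(w)=\int_0^\infty\varphi(\rho)\rho^{w-1}d\rho$ is holomorphic for $\mathrm{Re}\,w>0$, has a simple pole at $w=0$ with residue $\int_0^\infty(-\varphi'(\rho))\,d\rho=1$, and by $j$-fold integration by parts equals $\frac{(-1)^{j}}{w(w+1)\cdots(w+j-1)}\int_0^\infty\varphi^{(j)}(\rho)\rho^{w+j-1}d\rho$, which furnishes both its meromorphic continuation and its decay in vertical strips; the same holds for $\varphi_0$. Working first where the Dirichlet series converges, I would write the smoothed first sum as $\frac{1}{2\pi i}\int_{(c)}\tilde\varphi(w)\frac{\Gamma(s+w+\frac{k-1}{2})}{\Gamma(s+\frac{k-1}{2})}(it)^{-w}y_1^{w}L_f^{(m)}(s+w)\,dw$ with $c$ large, the archimedean quotient being forced by (\ref{3AC}) rather than the bare series. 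Since this quotient is $\sim(it)^{w}$ by Stirling and thus cancels the phase $(it)^{-w}$, its value at $w=0$ is $1$, so shifting the contour left across the simple pole of $\tilde\varphi$ produces exactly $L_f^{(m)}(s)$ as residue, while the deviation of the quotient from $(it)^{w}$, reorganised by the integration-by-parts identity above, supplies the first correction $\sum_j\varphi^{(j)}(n/y_1)(-n/y_1)^{j}\gamma_j^{(0)}(s,1/|t|)$; here the leading contribution vanishes because $\sum_{i=0}^{j}\mathrm{Res}_{w=-i}\frac{1}{w(w+1)\cdots(w+j)}=\frac{1}{j!}(1-1)^{j}=0$, confirming that $\gamma_j^{(0)}$ is genuinely a lower-order term.

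Next I would insert (\ref{3DFE}) into the shifted integral, where $\mathrm{Re}(1-s-w)>1$, expand each $L_f^{(r)}(1-s-w)$ as its Dirichlet series, and interchange summation and integration. For every $r$ this leaves an inner integral of $\tilde\varphi(w)(ny_1)^{w}\chi_f^{(m-r)}(s+w)$; writing $\chi_f^{(m-r)}=\chi_f\cdot(\chi_f^{(m-r)}/\chi_f)$, substituting (\ref{XFE}), and using (\ref{STR}) gives $\chi_f(s+w)/\chi_f(s)\sim(2\pi/|t|)^{2w}$, so that $(ny_1)^{w}$ becomes $(n/y_2)^{w}$ by the relation $(2\pi)^2y_1y_2=|t|^2$. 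Mellin inversion of the leading part then reconstitutes the dual sum weighted by $\varphi_0(n/y_2)=1-\varphi(y_2/n)$, the constant $1$ arising from the residue at $w=0$; and, exactly as on the first side, the subleading part of the Gamma quotient $\Gamma(1-s+w+\frac{k-1}{2})/\Gamma(1-s+\frac{k-1}{2})$, organised by integration by parts in $\tilde\varphi_0$, produces the second correction $\chi_f(s)\sum_j\varphi_0^{(j)}(n/y_2)(-n/y_2)^{j}\gamma_j^{(m-r)}(1-s,1/|t|)$. Truncating each Stirling expansion after $l$ steps leaves two remainders that I would assemble into the two $O$-terms of $R_\varphi(s)$.

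I expect the main obstacle to be the contour analysis on $\mathcal{F}$ together with the uniform error estimation. On the two semicircular arcs of radius $\sqrt{|t|}$ the ratio $\Gamma(s+w+\frac{k-1}{2})/\Gamma(s+\frac{k-1}{2})$ grows, and one must verify that the factor $1/\big(w(w+1)\cdots(w+l)\big)$ damps this growth enough for the arcs to contribute within the claimed bounds; this is precisely where the hypothesis $l\ge(k+1)/2$ enters, securing absolute convergence of the integrals defining $\gamma_j^{(r)}$ and of the sums $\sum_n\lambda_f(n)(-\log n)^{\bullet}n^{-\sigma}\varphi^{(j)}(n/y_\bullet)(n/y_\bullet)^{j}$. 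Collecting the Stirling remainder of order $|t|^{-l/2}$ on $\mathcal{F}$, bounding $\chi_f^{(m-r)}$ through (\ref{STR}), and summing against $\lambda_f(n)\ll_\varepsilon n^\varepsilon$ over the effective ranges $n\asymp y_1$ and $n\asymp y_2$ should yield the factors $y_1^{1-\sigma}(\log y_1)^{m}|t|^{-l/2}\|\varphi^{(l+1)}\|_1$ and $y_2^{\sigma}\big(\sum_{r=0}^{m}(\log y_2)^{r}(\log|t|)^{m-r}\big)|t|^{1-2\sigma-l/2}\|\varphi_0^{(l+1)}\|_1$, which are exactly the error terms in $R_\varphi(s)$, completing the proof of (\ref{AA1}).
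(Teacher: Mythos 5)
Your plan is essentially the paper's own proof: the kernel $\tilde\varphi(w)=K_\varphi(w)/w$ with residue $1$ at $w=0$ and its continuation by repeated integration by parts, the archimedean factor $\Gamma(s+w+\frac{k-1}{2})/\Gamma(s+\frac{k-1}{2})$, the insertion of \eqref{3DFE} on the shifted line, the contour $\mathcal{F}$ with the residues at $w=0,-1,\dots,-l$ producing the $\varphi^{(j)}(n/y_\bullet)(-n/y_\bullet)^j\gamma_j^{(\cdot)}$ corrections (including the observation that $\sum_{i=0}^{j}\mathrm{Res}_{w=-i}\,\bigl(w(w+1)\cdots(w+j)\bigr)^{-1}=0$ forces $\gamma_j^{(r)}$ to be small for $j\geq1$), and the relation $(2\pi)^2y_1y_2=|t|^2$ converting $(ny_1)^w$ into $(n/y_2)^w$ all match Propositions \ref{MAFE} and \ref{PAFE}; your direct Mellin inversion on the dual side, picking up the residue $1$ to form $\varphi_0(n/y_2)=1-\varphi(y_2/n)$, is just an unwound version of the paper's use of $K_\varphi(w)=K_{\varphi_0}(-w)$ and $w\mapsto-w$.

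One concrete point to repair: you propose to control the arithmetic sums with Deligne's bound $\lambda_f(n)\ll_\varepsilon n^\varepsilon$, but summing $|\lambda_f(n)|(\log n)^r n^{-(\sigma+u)}$ over $n\leq\rho y$ that way yields $(\rho y)^{1-\sigma-u+\varepsilon}$, hence error terms $y_1^{1-\sigma+\varepsilon}\cdots$ and $y_2^{\sigma+\varepsilon}\cdots$ rather than the stated $y_1^{1-\sigma}$ and $y_2^{\sigma}$. The paper instead applies Cauchy's inequality together with Rankin's mean-value estimate \eqref{RAN}, $\sum_{n\leq x}|\lambda_f(n)|^2=C_fx+O(x^{3/5})$, to get $\sum_{n\leq\rho y}|\lambda_f(n)|(\log n)^r n^{-(\sigma+u)}\ll(\rho y)^{1-(\sigma+u)}(\log\rho y)^r$ with no $\varepsilon$-loss; this sharper input is what delivers the error terms exactly as written in $R_\varphi(s)$ (Deligne is reserved for Theorem \ref{ATH2}, where an $\varepsilon$ in the exponent is harmless). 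A smaller quibble: the hypothesis $l\in\mathbb{Z}_{\geq(k+1)/2}$ is used first of all to make the horizontal segments at height $\pm v$ vanish as $|v|\to\infty$ (the integrand there is $\ll|v|^{\frac{k-1}{2}-l}(\log|v|)^m$), i.e.\ to justify the initial contour shift, not only to tame the arcs of radius $\sqrt{|t|}$.
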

Introducing new functions $\xi\not\in\mathcal{R}$ and $\psi_\alpha\in\mathcal{R}$ for making the main term of  without characteristic function and the error term depending on $\alpha\in\mathbb{R}_{\geq0}$ of the approximate functional equation, replacing $\varphi$ to $\varphi_\alpha$ in Theorem \ref{ATH1}, using Deligne's result (see \cite{DEL}): $|\lambda_f(n)|\leq d(n)$ and choosing $\alpha$ to minimize the error term, we obtain the approximate functional equation for $L_f^{(m)}(s)$:
\begin{thm}\label{ATH2}
For any $m\in\mathbb{Z}_{\geq0}$ and $s=\sigma+it$ such that $\sigma\in[0,1]$ and $|t|\gg 1$, we have
\begin{align}
L_f^{(m)}(s)=&\sum_{n\leq \frac{|t|}{2\pi}}\frac{\lambda_f(n)(-\log n)^m}{n^s}\notag+\\
&+\sum_{r=0}^m(-1)^{r}\binom{m}{r}\chi_f^{(m-r)}(s)\sum_{n\leq \frac{|t|}{2\pi}}\frac{\lambda_f(n)(-\log n)^r}{n^{1-s}}+O(|t|^{1/2-\sigma+\varepsilon}), 
\end{align}
where $\varepsilon$ is an arbitrary positive number.
\end{thm}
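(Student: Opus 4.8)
The plan is to deduce Theorem \ref{ATH2} from Theorem \ref{ATH1} by specializing to a family of characteristic functions $\varphi_\alpha \in \mathcal{R}$ that collapse to a sharp cutoff as $\alpha \to \infty$, taking $y_1 = y_2 = |t|/(2\pi)$ (so that $(2\pi)^2 y_1 y_2 = |t|^2$), and then optimizing the two free parameters $\alpha$ and $l$. I would build $\varphi_\alpha$ from the non-smooth step function $\xi = \mathbf{1}_{[0,1]} \notin \mathcal{R}$ and a mollifier $\psi_\alpha \in \mathcal{R}$ of scale $1/\alpha$, arranging that $\varphi_\alpha \equiv 1$ on $[0, 1 - c/\alpha]$, $\varphi_\alpha \equiv 0$ on $[1 + c/\alpha, \infty)$, with transition width $\asymp 1/\alpha$ about $\rho = 1$; a scaling computation then gives $\|\varphi_\alpha^{(j)}\|_1 \ll_j \alpha^{j-1}$ for $j \geq 1$, and the conjugate $\varphi_{0,\alpha}(\rho) = 1 - \varphi_\alpha(1/\rho)$ enjoys the analogous estimates about $\rho = 1$.

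Substituting $\varphi = \varphi_\alpha$ and $y_1 = y_2 = |t|/(2\pi)$ into (\ref{AA1}), the first step is to replace each smooth main sum by its sharp truncation $\sum_{n \leq |t|/(2\pi)}$. The discrepancy is supported in the transition range $n \asymp |t|$ of length $\asymp |t|/\alpha$; invoking Deligne's bound $|\lambda_f(n)| \leq d(n) \ll n^\varepsilon$, and on the dual side also $|\chi_f^{(m-r)}(s)| \ll |t|^{1-2\sigma}(\log|t|)^{m-r}$ from (\ref{STR}), both discrepancies are $\ll |t|^{1-\sigma+\varepsilon}/\alpha$. The two explicit $O$-terms of $R_{\varphi_\alpha}$ are controlled using $\|\varphi_\alpha^{(l+1)}\|_1, \|\varphi_{0,\alpha}^{(l+1)}\|_1 \ll \alpha^l$, which yields $\ll |t|^{1-\sigma-l/2+\varepsilon}\alpha^l$ on each side.

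The crux is the two $\gamma_j$-correction sums. The estimate I would establish is
\[
|\gamma_j^{(r)}(s, 1/|t|)|, \ |\gamma_j^{(r)}(1-s, 1/|t|)| \ll |t|^{-j/2+\varepsilon} \qquad (1 \leq j \leq l),
\]
by bounding the integral over $\mathcal{F}$: there $|w| \asymp \sqrt{|t|}$, so the pole factor contributes $|w(w+1)\cdots(w+j)|^{-1} \asymp |t|^{-(j+1)/2}$, the factor $(\rho e^{-i\frac{\pi}{2}\mathrm{sgn}(t)})^w$ with $\rho = 1/|t|$ contributes $|t|^{-\mathrm{Re}\,w}$, and the ratio $(\chi_f^{(r)}/\chi_f)(1-s-w)\,\Gamma(s+w+\tfrac{k-1}{2})/\Gamma(s+\tfrac{k-1}{2})$ is governed by Stirling (\ref{STR}); integrating over the length-$\asymp \sqrt{|t|}$ contour leaves the stated bound. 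Coupling this with $\sum_n |\varphi_\alpha^{(j)}(n/y_1)| \ll y_1 \|\varphi_\alpha^{(j)}\|_1 \ll |t|\,\alpha^{j-1}$ (valid since $y_1 \asymp |t| \gg \alpha$) and the symmetric dual estimate, each $\gamma_j$ sum is $\ll |t|^{1-\sigma+\varepsilon}\,\alpha^{j-1}\,|t|^{-j/2}$.

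Finally I would optimize. Balancing the transition error $|t|^{1-\sigma}/\alpha$ against the explicit error $|t|^{1-\sigma-l/2}\alpha^l$ forces $\alpha = |t|^{l/(2(l+1))}$; with this choice each $\gamma_j$ sum becomes $\ll |t|^{1-\sigma-(j+l)/(2(l+1))+\varepsilon}$, which is largest at $j=1$ and equals $|t|^{1/2-\sigma+\varepsilon}$ there, while the transition and explicit errors both equal $|t|^{1/2-\sigma+\frac{1}{2(l+1)}+\varepsilon}$. Choosing $l$ large in terms of $\varepsilon$ (and $\geq (k+1)/2$, as Theorem \ref{ATH1} requires) makes $\frac{1}{2(l+1)} < \varepsilon$, so the total error drops below $|t|^{1/2-\sigma+\varepsilon}$, which is the assertion. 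I expect the principal obstacle to be the uniform contour bound for $\gamma_j^{(r)}$: one must verify that the $\chi_f^{(r)}/\chi_f$ and Gamma-ratio factors never overwhelm the gain $|t|^{-(j+1)/2}$ from the poles along the whole of $\mathcal{F}$, in particular on the curved arcs where $\mathrm{Re}\,w$ is large and where the exponential $e^{\frac{\pi}{2}\mathrm{sgn}(t)\,\mathrm{Im}\,w}$ arising from $(\rho e^{-i\frac{\pi}{2}\mathrm{sgn}(t)})^w$ must be absorbed by the decay of the Gamma ratio.
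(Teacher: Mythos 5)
Your proposal follows essentially the same route as the paper: it likewise mollifies the sharp cutoff $\xi=\mathbf{1}_{[0,1]}$ at scale $|t|^{-\alpha}$ (your $\alpha$ corresponds to the paper's $|t|^{\alpha}$), sets $y_1=y_2=|t|/(2\pi)$ in Theorem \ref{ATH1}, bounds the transition-range discrepancy with Deligne's estimate and the bound for $\chi_f^{(m-r)}(s)$, controls the $\gamma_j$ sums via $\gamma_j^{(r)}(s,1/|t|)\ll(\log|t|)^r|t|^{-j/2}$ (the paper's Remark \ref{MEM1}, obtained by exactly the contour estimate you describe, where the $|t|^{u}$ from the Gamma ratio cancels against $\rho^{w}=|t|^{-u}$), and then optimizes with the mollification exponent near $1/2$ and $l\gg 1/\varepsilon$. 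The argument is correct and only the parametrization and the precise balancing point differ cosmetically from the paper's choice $\alpha=1/2-\varepsilon$, $l\geq 1/(2\varepsilon)$.
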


Using Rankin's result (see \cite[(4.2.3), p.364]{RAN}):
\begin{align}
\sum_{n\leq x}|\lambda_f(n)|^2=C_fx+O(x^{\frac{3}{5}}) \label{RAN}
\end{align}
where $C_f$ is a positive constant depending on $f$, the approximate formula of $\chi_f^{(r)}(s)$ and the generalizations of Lemmas 6, 7 of \cite{GD1} to estimate a double sum containing $(\log n_1)^{r_1}(\log n_2)^{r_2}$ where $r_1+r_2=r$, we obtain the mean square for $L^{(m)}_f(s)$:
\begin{thm}\label{ATH3}
For any $m\in\mathbb{Z}_{\geq0}$ and large $T\in\mathbb{R}_{>0}$,  we have
\begin{align}
&\int_0^T|L_f^{(m)}(\sigma+it)|^2dt\notag\\
&=\begin{cases} A_{f,m}T(\log T)^{2m+1} +O(T(\log T)^{2m}), & \sigma=1/2, \\\displaystyle T\sum_{n=1}^\infty \frac{|\lambda_f(n)|^2(\log n)^{2m}}{n^{2\sigma}}+O(T^{2(1-\sigma)}(\log T)^{2m}), & 1/2<\sigma<1, \\\displaystyle T\sum_{n=1}^\infty \frac{|\lambda_f(n)|^2(\log n)^{2m}}{n^{2\sigma}}+O((\log T)^{2m+2}), & \sigma=1, \end{cases} \label{AA3}
\end{align}
where $A_{f,m}$ is given by $$A_{f,m}=\Biggl(\frac{1}{2m+1}+\sum_{r=0}^{2m}\frac{(-2)^{2m-r}}{r+1}\sum_{r_1+r_2=r}\binom{m}{r_1}\binom{m}{r_2}\Biggr)C_f.$$
\end{thm}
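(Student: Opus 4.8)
The plan is to square the approximate functional equation of Theorem~\ref{ATH2} and integrate. Write $L_f^{(m)}(s)=M_1(s)+M_2(s)+O(|t|^{1/2-\sigma+\varepsilon})$, where
$$M_1(s)=\sum_{n\leq|t|/2\pi}\frac{\lambda_f(n)(-\log n)^m}{n^s},\qquad M_2(s)=\sum_{r=0}^m(-1)^r\binom{m}{r}\chi_f^{(m-r)}(s)\sum_{n\leq|t|/2\pi}\frac{\lambda_f(n)(-\log n)^r}{n^{1-s}}.$$
Expanding gives $|L_f^{(m)}(s)|^2=|M_1(s)|^2+|M_2(s)|^2+2\,\mathrm{Re}\,(M_1(s)\overline{M_2(s)})$ together with cross terms against the remainder; the latter are disposed of by Cauchy--Schwarz, using a crude mean-square bound for each $M_j(\sigma+it)$ together with the size $O(|t|^{1/2-\sigma+\varepsilon})$ of the remainder. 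I would then integrate each of the three genuine pieces over $t\in[0,T]$ and isolate the diagonal contributions.

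For $\int_0^T|M_1|^2dt$ I expand into a double sum over $n_1,n_2$ and separate $n_1=n_2$ from $n_1\neq n_2$. Since $\lambda_f(n)\in\mathbb{R}$, the diagonal equals $\int_0^T\sum_{n\leq|t|/2\pi}|\lambda_f(n)|^2(\log n)^{2m}n^{-2\sigma}\,dt$. Inserting Rankin's estimate~\eqref{RAN} and summing by parts, the inner sum converges to $\sum_{n=1}^\infty|\lambda_f(n)|^2(\log n)^{2m}n^{-2\sigma}$ when $\sigma>1/2$ and grows like $\tfrac{C_f}{2m+1}(\log(|t|/2\pi))^{2m+1}$ when $\sigma=1/2$; integrating in $t$ then yields the main term $T\sum_{n=1}^\infty|\lambda_f(n)|^2(\log n)^{2m}n^{-2\sigma}$ for $\sigma>1/2$ and the contribution $\tfrac{C_f}{2m+1}T(\log T)^{2m+1}$ for $\sigma=1/2$. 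The off-diagonal part carries the oscillatory kernel $(n_2/n_1)^{it}$, and after integration in $t$ it is controlled by the generalizations of Lemmas~6 and~7 of~\cite{GD1} adapted to the logarithmic weights $(\log n_1)^m(\log n_2)^m$, contributing only to the stated error terms.

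For $\int_0^T|M_2|^2dt$ I first invoke the approximate formula $\chi_f^{(m-r)}(s)=\bigl(-2\log\tfrac{|t|}{2\pi}\bigr)^{m-r}\chi_f(s)\,(1+O(\dots))$, which by the binomial theorem collapses the $r$-sum and gives $M_2(s)=\chi_f(s)\sum_{n\leq|t|/2\pi}\lambda_f(n)n^{-(1-s)}\bigl(\log n-2\log\tfrac{|t|}{2\pi}\bigr)^m+(\text{lower order})$. Using $|\chi_f(s)|^2=(2\pi)^{2(2\sigma-1)}|t|^{2(1-2\sigma)}(1+O(|t|^{-1}))$ from~\eqref{STR}, the diagonal of $|M_2|^2$ at $\sigma=1/2$ reduces, after expanding $\bigl(\log n-2\log\tfrac{|t|}{2\pi}\bigr)^{2m}$ and applying~\eqref{RAN} with partial summation, to $C_f\sum_{r=0}^{2m}\frac{(-2)^{2m-r}}{r+1}\binom{2m}{r}T(\log T)^{2m+1}$; the Vandermonde identity $\binom{2m}{r}=\sum_{r_1+r_2=r}\binom{m}{r_1}\binom{m}{r_2}$ then shows that this combines with the $|M_1|^2$ diagonal to produce exactly $A_{f,m}T(\log T)^{2m+1}$. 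For $1/2<\sigma\leq1$ the factor $|\chi_f(s)|^2\asymp|t|^{2(1-2\sigma)}$ reduces the size of the $|M_2|^2$ contribution so that it falls entirely into the error term of~\eqref{AA3}.

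Finally, the cross term $2\,\mathrm{Re}\,(M_1\overline{M_2})$ carries a single factor $\overline{\chi_f(s)}$ together with the non-diagonal kernel $(n_1n_2)^{-it}$; combining the oscillation of $\chi_f$ (whose phase is $\exp(2it\log\tfrac{|t|}{2\pi e}+\cdots)$ by~\eqref{STR}) with $(n_1n_2)^{-it}$, the only possible stationary point $n_1n_2=(|t|/2\pi)^2$ lies at the edge of the summation range, so the generalized Good lemmas again bound this piece by the stated error. The main obstacle I anticipate is precisely this package of off-diagonal and cross-term estimates: one must extend Lemmas~6 and~7 of~\cite{GD1} to double sums weighted by $(\log n_1)^{r_1}(\log n_2)^{r_2}$ with $r_1+r_2\le 2m$, and control simultaneously the $t$-dependent cut-off $n\leq|t|/2\pi$ and the oscillation of $\chi_f$, while keeping the logarithmic powers in the error sharp (namely $(\log T)^{2m}$ at $\sigma=1/2$ and $(\log T)^{2m+2}$ at $\sigma=1$). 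Once these bounds are established, collecting the diagonal main terms and discarding the remaining pieces gives~\eqref{AA3}.
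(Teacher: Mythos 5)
The central difficulty is that you start from Theorem \ref{ATH2}, whose remainder is only $O(|t|^{1/2-\sigma+\varepsilon})$, and this error is too large to recover the error terms claimed in \eqref{AA3}. After squaring and integrating, the remainder alone contributes $O(T^{2(1-\sigma)+2\varepsilon})$, and the Cauchy--Schwarz cross term of $M_1$ against the remainder contributes, at $\sigma=1/2$, at least
\begin{align*}
\Bigl(\int_0^T|M_1|^2dt\Bigr)^{1/2}\Bigl(\int_0^T t^{1-2\sigma+2\varepsilon}dt\Bigr)^{1/2}\ll T^{1+\varepsilon}(\log T)^{m+\frac12},
\end{align*}
which exceeds the claimed $O(T(\log T)^{2m})$ by a power $T^{\varepsilon}$; at $\sigma=1$ the same cross term is $O(T^{1/2+\varepsilon})$, swamping $O((\log T)^{2m+2})$. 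No choice of $\varepsilon$ fixes this. This is precisely why the paper does not square Theorem \ref{ATH2} but instead returns to the smoothed equation of Theorem \ref{ATH1} with $y_1=y_2=|t|/2\pi$: there the remainder $R_\varphi(s)$ is shown pointwise (see \eqref{RRE}) to be $O((\log t)^m/t^{\sigma})$, whose mean square is $O((\log T)^{2m+1})$ at $\sigma=1/2$ and $O(1)$ for $\sigma>1/2$, small enough to be absorbed.

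A second, related gap is your treatment of the off-diagonal and cross terms with the sharp cutoff $n\leq|t|/2\pi$. The paper's Lemma \ref{GIE} (the generalization of Good's Lemma 7) is formulated for the smooth partition $\varphi=\varphi_1+\varphi_2$, $\varphi_0=\varphi_{01}+\varphi_{02}$ of \eqref{CFD}; the integrations by parts there use bounds like $(\varphi_X(2\pi n/t))'=O(n/t^2)$, and the splitting keeps the $\varphi_1$-ranges away from $n\asymp t/2\pi$, i.e.\ away from the stationary point of $\chi_f^{(\alpha)}(s)(n_1n_2)^{it}$ that you yourself identify. With a sharp cutoff these manipulations produce uncontrolled boundary terms at $t=2\pi\max(n_1,n_2)$ and the "edge" stationary point is no longer harmless, so appealing to "the generalized Good lemmas" is not available without reworking them for discontinuous weights. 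Your identification of the main term constant is sound: the Vandermonde collapse $\sum_{r_1+r_2=r}\binom{m}{r_1}\binom{m}{r_2}=\binom{2m}{r}$ and the partial-summation computation giving $C_f\sum_{r}\frac{(-2)^{2m-r}}{r+1}\binom{2m}{r}$ from the $\chi_f$-side diagonal, plus $\frac{C_f}{2m+1}$ from the $M_1$ diagonal, do reproduce $A_{f,m}$. But to make the error terms correct you must replace Theorem \ref{ATH2} by Theorem \ref{ATH1} with the smooth cutoffs and carry out the estimates via Lemmas \ref{GIE} and \ref{G6A}, as the paper does.
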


Theorems \ref{ATH1}--\ref{ATH3} is applied to the study of zero-density estimate for $L_f^{(m)}(s)$ in \cite{YAS}. 
In order to prove Theorems \ref{ATH1}--\ref{ATH3}, we  shall show preliminary lemmas in Section \ref{AMPL}. Using these lemmas we shall give proof of Theorems \ref{ATH1}--\ref{ATH3} in Sections \ref{AMP1}--\ref{AMP3} respectively.

\section{Preliminary Lemmas}\label{AMPL}
To prove Theorem \ref{ATH1}, we introduce a new function.
For $\varphi\in\mathcal{R}$, let $K_\varphi(w)$ be the function
\begin{align*}
K_\varphi(w)=w\int_0^\infty\varphi(\rho)\rho^{w-1}d\rho \quad (\text{Re }w>0).
\end{align*}
Then the following fact is known:
\begin{lem}[{\cite[p.335, Lemma 3]{GD1}}]\label{EXE}
The function $K_\varphi(w)$ is analytically continued for to the whole $w$-plane, and has the functional equation
\begin{align}
K_{\varphi}(w)=&K_{\varphi_0}(-w). \label{KFE}
\end{align}
Furthermore we have the integral representation
\begin{align}
\frac{K_\varphi(w)}{w}=&\frac{(-1)^{l+1}}{w(w+1)\cdots(w+l)}\int_0^\infty\varphi^{(l+1)}(\rho)\rho^{w+l}d\rho \label{KEP}
\end{align}
for $l\in\mathbb{Z}_{\geq0}$. Especially $K_\varphi(0)=1$.
\end{lem}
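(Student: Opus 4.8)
The plan is to base the entire argument on repeated integration by parts, exploiting that every derivative $\varphi^{(j)}$ with $j\geq 1$ is supported in the compact interval $[1/2,2]$ (because $\varphi$ is constant on $[0,1/2]$ and on $[2,\infty)$). First I would integrate by parts once in the defining integral for $\mathrm{Re}\,w>0$, writing $\int_0^\infty\varphi(\rho)\rho^{w-1}d\rho=\bigl[\varphi(\rho)\rho^w/w\bigr]_0^\infty-\tfrac{1}{w}\int_0^\infty\varphi'(\rho)\rho^w d\rho$. The boundary term vanishes: at $\rho\to\infty$ because $\varphi\equiv0$ there, and at $\rho\to0$ because $\rho^w\to0$ when $\mathrm{Re}\,w>0$ while $\varphi\to1$. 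This yields $K_\varphi(w)=-\int_0^\infty\varphi'(\rho)\rho^w d\rho$. Since $\varphi'$ is smooth with compact support, the right-hand side is entire in $w$, which furnishes the analytic continuation to the whole plane claimed in the first assertion. Setting $w=0$ gives immediately $K_\varphi(0)=-\int_0^\infty\varphi'(\rho)\,d\rho=\varphi(0)-\varphi(\infty)=1$.

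Next I would iterate this procedure. Integrating by parts $l$ further times, at each step the boundary contributions vanish because $\varphi^{(j)}$ with $j\geq1$ is compactly supported, and a factor $-1/(w+j)$ is produced. A short induction then yields $K_\varphi(w)=\frac{(-1)^{l+1}}{(w+1)\cdots(w+l)}\int_0^\infty\varphi^{(l+1)}(\rho)\rho^{w+l}d\rho$, and dividing by $w$ produces the representation (\ref{KEP}) exactly.

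For the functional equation I would work with the entire representation $K_\varphi(w)=-\int_0^\infty\varphi'(\rho)\rho^w d\rho$ for both sides, which conveniently sidesteps the fact that the original Mellin integral for $K_{\varphi_0}(-w)$ converges only for $\mathrm{Re}\,w<0$. From $\varphi_0(\rho)=1-\varphi(1/\rho)$ the chain rule gives $\varphi_0'(\rho)=\rho^{-2}\varphi'(1/\rho)$, so $K_{\varphi_0}(-w)=-\int_0^\infty\varphi'(1/\rho)\rho^{-w-2}d\rho$. The substitution $\sigma=1/\rho$ (with $d\rho=-\sigma^{-2}d\sigma$ and $\rho^{-w-2}=\sigma^{w+2}$) turns this into $-\int_0^\infty\varphi'(\sigma)\sigma^w d\sigma=K_\varphi(w)$, which is (\ref{KFE}).

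The whole argument is elementary once the support and boundary behaviour of $\varphi$ and its derivatives are pinned down. The only point requiring genuine care is the systematic verification that every boundary term vanishes at each integration by parts and the bookkeeping of signs and shifted factors $(w+j)$ in the induction; this is precisely where the defining properties of the class $\mathcal{R}$ (constancy near $0$ and near $\infty$) are used. I do not expect any serious analytic obstacle.
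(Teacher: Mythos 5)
Your proof is correct, and it follows the standard (and essentially the only natural) route: the paper itself states this lemma as a citation of Good's Lemma 3 without reproducing a proof, but the repeated integration by parts, with boundary terms killed by the constancy of $\varphi$ near $0$ and its vanishing near $\infty$, together with the substitution $\sigma=1/\rho$ applied to the entire representation $K_\varphi(w)=-\int_0^\infty\varphi'(\rho)\rho^w\,d\rho$, is exactly the intended argument. Your care in using the continued (entire) representation both to evaluate $K_\varphi(0)$ and to make sense of $K_{\varphi_0}(-w)$ for $\mathrm{Re}\,w>0$ is precisely the point that needs attention, and you handled it correctly.
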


Next the following fact is useful for estimating the integrals \eqref{HE0}, $I_1'$ and $I_2'$ in Section \ref{ATH1}: 
\begin{lem}[{\cite[p.334, Lemma 2]{GD1}}]
Put $s=\sigma+it$ and $w=u+iv$. For $c_1,c_2\in\mathbb{R}$ let $D_1$ be the strip such that $\sigma\in[c_1,c_2]$ and $t\in\mathbb{R}$ in $s$-plane, and $D_2$ a half-strip such that $\sigma\in(-\infty,-1/2-(k-1)/2)$ and $t\in(-1,1)$. 
For fixed $c_3, c_4\in\mathbb{R}_{>0}$, there exist $c_5\in\mathbb{R}_{>0}$ and $c_6\in\mathbb{R}_{>0}$ such that
\begin{align}
&\left|\frac{\Gamma(s+w+\frac{k-1}{2})}{\Gamma(s+\frac{k-1}{2})}(e^{-i\frac{\pi}{2}{\rm sgn}(t)})^w\right|\notag\\
&\leq \begin{cases} c_5\dfrac{(1+|t+v|)^{\sigma+u-\frac{1}{2}+\frac{k-1}{2}}}{|t|^{\sigma-\frac{1}{2}+\frac{k-1}{2}}}, & s\in D_1, \ s+w\in D_1\setminus D_2, \ |t|\geq c_3, \\ c_6|t|^u, & s\in D_1, \ |w|\leq c_4|t|^{1/2}. \end{cases} \label{GEE}
\end{align}
\end{lem}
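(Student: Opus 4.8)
The plan is to prove both bounds by Stirling's formula, using the key observation that the oscillating factor is tailored to cancel the exponential decay of the $\Gamma$-ratio. Write $a=(k-1)/2$. Since $|(e^{-i\frac{\pi}{2}{\rm sgn}(t)})^w|=e^{\frac{\pi}{2}{\rm sgn}(t)v}$, the quantity to be bounded is $\frac{|\Gamma(s+w+a)|}{|\Gamma(s+a)|}\,e^{\frac{\pi}{2}{\rm sgn}(t)v}$. I will recall Stirling in the form $|\Gamma(x+iy)|=\sqrt{2\pi}\,|y|^{x-1/2}e^{-\frac{\pi}{2}|y|}(1+O(|y|^{-1}))$, uniformly for $x$ in a fixed compact interval as $|y|\to\infty$; the membership $s\in D_1$, $s+w\in D_1\setminus D_2$ is exactly what confines the real parts to compact intervals and keeps $s+w+a$ away from the poles of $\Gamma$ on the nonpositive real axis, so that this asymptotic applies.

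For the first estimate I would apply Stirling to numerator and denominator with $(x,y)=(\sigma+u+a,\,t+v)$ and $(x,y)=(\sigma+a,\,t)$ respectively. The three exponential contributions combine into $\exp\!\big(\tfrac{\pi}{2}({\rm sgn}(t)(t+v)-|t+v|)\big)$, and since ${\rm sgn}(t)(t+v)\le|t+v|$ for every sign of $t$, this factor is $\le1$. What survives is the polynomial ratio $|t+v|^{\sigma+u+a-1/2}/|t|^{\sigma+a-1/2}$, which is the claimed bound once $|t+v|$ is large. When $|t+v|$ is bounded, excluding $D_2$ forces ${\rm Re}(s+w+a)\ge-1/2$, so $|\Gamma(s+w+a)|$ stays bounded on the relevant compact region while $(1+|t+v|)\asymp1$; replacing $|t+v|$ by $1+|t+v|$ unifies the two regimes and absorbs these constants, producing the factor $c_5$.

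For the second estimate the regime $|w|\le c_4|t|^{1/2}$ lets me treat $s+w$ as a small perturbation of $s$. I would expand $\log\frac{\Gamma(s+w+a)}{\Gamma(s+a)}=w\log(s+a)+O\!\big(|w|^2/|t|+|w|/|t|\big)$ via Stirling; the threshold $|w|\le c_4|t|^{1/2}$ is precisely what makes the quadratic remainder $|w|^2/|t|$ bounded. Hence $\frac{\Gamma(s+w+a)}{\Gamma(s+a)}=(s+a)^w e^{O(1)}$, so the left side is $\le C|s+a|^{u}e^{-v\arg(s+a)}e^{\frac{\pi}{2}{\rm sgn}(t)v}$. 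Using $|s+a|\asymp|t|$ and $\arg(s+a)=\frac{\pi}{2}{\rm sgn}(t)+O(|t|^{-1})$, together with $|v|\le c_4|t|^{1/2}$, the argument error contributes $|v|\cdot O(|t|^{-1})=O(|t|^{-1/2})$, so the two exponentials in $v$ cancel up to $e^{O(1)}$ and the bound $c_6|t|^u$ follows.

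The main obstacle is bookkeeping the exact cancellations rather than any single hard idea: in the first case one must verify the sign inequality ${\rm sgn}(t)(t+v)\le|t+v|$ uniformly and handle the transition between the large- and bounded-$|t+v|$ regimes through the $(1+|t+v|)$ factor; in the second case the delicate points are that $|w|\le c_4|t|^{1/2}$ is the exact threshold keeping $|w|^2/|t|$ bounded and that $v\,(\arg(s+a)-\frac{\pi}{2}{\rm sgn}(t))$ stays bounded. Throughout, one must ensure $s+w+a$ is kept away from the poles of $\Gamma$, which is the geometric purpose of passing to $D_1\setminus D_2$.
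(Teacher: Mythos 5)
The paper does not prove this lemma at all---it is quoted directly from Good \cite[Lemma 2]{GD1}---so there is no internal proof to compare against; your Stirling-based reconstruction is the standard argument for it and is essentially correct. The two mechanisms you isolate are exactly the right ones, namely the cancellation $e^{\frac{\pi}{2}({\rm sgn}(t)(t+v)-|t+v|)}\leq 1$ coming from ${\rm sgn}(t)(t+v)\leq|t+v|$ in the first case, and the expansion $\Gamma(s+w+\frac{k-1}{2})/\Gamma(s+\frac{k-1}{2})=(s+\frac{k-1}{2})^{w}e^{O(1)}$ made possible by $|w|^{2}/|t|=O(1)$ in the second (with the minor caveats that the second bound implicitly requires $|t|$ bounded below, and that the exclusion of $D_2$ keeps $s+w+\frac{k-1}{2}$ away from the poles of $\Gamma$ only up to the boundary point near $0$, neither of which matters in any application made in this paper).
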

The following fact is required to obtain the approximate formula for $(\chi_f^{(r)}/\chi_f)(s)$:
\begin{lem}\label{LIB}
Let $F$ and $G$ be holomorphic function in the region $D$ such that $F(s)\ne0$ and $\log F(s)=G(s)$ for $s\in D$. Then for any fixed $r\in\mathbb{Z}_{\geq1}$, there exist $l_1,\cdots,l_r\in\mathbb{Z}_{\geq0}$ and $C_{(l_1,\cdots,l_r)}\in\mathbb{Z}_{\geq0}$ such that
\begin{align}
\frac{F^{(r)}}{F}(s)
=\sum_{1l_1+\cdots+rl_r=r}C_{(l_1,\cdots,l_r)}(G^{(1)}(s))^{l_1}\cdots(G^{(r)}(s))^{l_r} \label{LII}
\end{align}
for $s\in D$. Especially $C_{(r,0,\cdots,0)}=1$. 
\end{lem}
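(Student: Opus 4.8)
The plan is to induct on $r$, using the single identity obtained by differentiating the relation $F=e^{G}$ (legitimate on $D$ since $\log F=G$ and $F\neq0$ there), namely
\[
F^{(1)}(s)=G^{(1)}(s)\,F(s).
\]
Introducing the shorthand $P_r:=F^{(r)}/F$, so that $P_0=1$ and $P_1=G^{(1)}$, the base case $r=1$ holds immediately: the only composition with $1\cdot l_1=1$ is $l_1=1$, and the coefficient is $C_{(1)}=1$, which is exactly the asserted value $C_{(r,0,\dots,0)}$.

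For the inductive step I would differentiate $F^{(r)}=F\cdot P_r$ and substitute $F^{(1)}=G^{(1)}F$ to get the recursion
\[
P_{r+1}=G^{(1)}P_r+P_r^{(1)}.
\]
Assuming by induction that $P_r$ is a $\mathbb{Z}_{\geq0}$-linear combination of monomials $(G^{(1)})^{l_1}\cdots(G^{(r)})^{l_r}$ with $\sum_{i}i\,l_i=r$, I would track the effect of each operation on the right. Multiplication by $G^{(1)}$ raises $l_1$ by $1$, increasing the weight $\sum_i i\,l_i$ by $1$. Differentiation acts on a monomial by the product rule, sending each factor $(G^{(i)})^{l_i}$ to $l_i\,(G^{(i)})^{l_i-1}G^{(i+1)}$, i.e.\ lowering $l_i$ by $1$ and raising $l_{i+1}$ by $1$ (possibly introducing the new top index $r+1$), which again increases the weight by $-i+(i+1)=1$. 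Hence every resulting monomial has weight $r+1$, so the homogeneity constraint $\sum_i i\,l_i=r+1$ is exactly reproduced, and since both operations only multiply existing nonnegative coefficients by the nonnegative integers $l_i$, all coefficients remain in $\mathbb{Z}_{\geq0}$.

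It remains to check the normalization $C_{(r+1,0,\dots,0)}=1$. The pure power $(G^{(1)})^{r+1}$ cannot come from $P_r^{(1)}$, because differentiation always lowers some index $l_i$ with $i\geq1$ and creates a factor $G^{(i+1)}$ with $i+1\geq2$; thus the only source of $(G^{(1)})^{r+1}$ is the term $G^{(1)}P_r$ applied to the monomial $(G^{(1)})^{r}$ occurring in $P_r$, whose coefficient is $1$ by the induction hypothesis. Therefore $C_{(r+1,0,\dots,0)}=1$, completing the induction. The only real work is the index bookkeeping in the preceding paragraph, and that is where I expect the main (purely combinatorial) obstacle to lie; there is no analytic difficulty, since holomorphy of $F$ and $G$ guarantees that all derivatives exist and the manipulations above are valid identities of holomorphic functions on $D$.
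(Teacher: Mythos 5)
Your proof is correct and follows essentially the same route as the paper: induction on $r$, differentiating the identity $F^{(r)}=F\cdot(F^{(r)}/F)$ and substituting $F'=G'F$, then tracking how multiplication by $G^{(1)}$ and differentiation act on the monomials $(G^{(1)})^{l_1}\cdots(G^{(r)})^{l_r}$. Your packaging via the recursion $P_{r+1}=G^{(1)}P_r+P_r^{(1)}$ and the explicit weight/normalization bookkeeping is in fact slightly more careful than the paper's, which simply asserts $C'_{(r+1,0,\dots,0)}=1\cdot C_{(r,0,\dots,0)}=1$.
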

\begin{proof}
The case $r=1$ is true because of $(F'/F)(s)=G'(s)$ for $s\in D$. If we assume (\ref{LII}) and $C_{(r,0\cdots,0)}=1$, then we have 
\begin{align*}
F^{(r+1)}(s)=& \sum_{1l_1+\cdots+rl_r=r} C_{(l_1,\cdots,l_r)}\Bigl((F'G^{(1)l_1}\cdots G^{(r)l_r})(s)+\\ 
&+l_1(FG^{(1)l_1-1}G^{(2)l_2+1}\cdots G^{(r)l_r})(s)+\cdots+\\
&+l_{r-1}(FG^{(1)l_1}\cdots G^{(r-1)l_{r-1}-1}G^{(r)l_r+1})(s)+\\
&+l_r(FG^{(1)l_1}\cdots G^{(r)l_r-1}G^{(r+1)})(s)\Bigr)\\
=& F(s)\sum_{1l_1'+\cdots+(r+1)l_{r+1}=r+1} C'_{(l_1',\cdots,l_{r+1}')}(G^{(1)l_1'}\cdots G^{(r+1)l_{r+1}'})(s)
\end{align*} 
and $C'_{(r+1,0,\cdots,0)}=1\cdot C(r,0,\cdots,0)=1$. Hence (\ref{LII}) is true for all $r\in\mathbb{Z}_{\geq1}$.
\end{proof}
Using Lemma \ref{LIB}, we can get the approximate formula for $(\chi_f^{(r)}/\chi_f)(s)$  as follows:
\begin{lem}\label{XAE}
For any $r\in\mathbb{Z}_{\geq1}$, the function $(\chi_f^{(r)}/\chi_f)(s)$ is holomorphic in $D=\mathbb{C} \setminus \{z\in \mathbb{C} \mid |\sigma|\geq k/2-1, |t|\leq 1/2 \}$. For any $s\in D$ we have
\begin{align*}
\frac{\chi_f^{(r)}}{\chi_f}(s)= 
\begin{cases} \displaystyle \left(-2\log\frac{|t|}{2\pi}\right)^r+O\left(\frac{(\log|t|)^{r-1}}{|t|}\right), & |t|\gg1, \\ O(1), & \text{$|t|\ll1$.} \end{cases} 
\end{align*}
\end{lem}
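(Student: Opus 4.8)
The plan is to apply Lemma~\ref{LIB} with $F=\chi_f$ and $G=\log\chi_f$, so that the statement reduces to controlling the successive logarithmic derivatives $G^{(j)}$ of $\chi_f$. First I would settle holomorphy and the shape of the domain $D$: from the explicit form \eqref{XFE}, the poles of $\chi_f$ are inherited from $\Gamma(1-s+\frac{k-1}{2})$ and sit at $s=\frac{k+1}{2}+n$ $(n\in\mathbb{Z}_{\geq0})$, while the zeros of $\chi_f$ come from the poles of the denominator $\Gamma(s+\frac{k-1}{2})$ and sit at $s=-\frac{k-1}{2}-n$. All of these lie on the real axis with $t=0$ and $|\sigma|\geq\frac{k-1}{2}\geq\frac{k}{2}-1$, hence inside the set removed to form $D$; so $\chi_f$ is holomorphic and non-vanishing on $D$. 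Although $\log\chi_f$ is multivalued on the non-simply-connected $D$, only the derivatives $G^{(j)}$ with $j\geq1$ enter \eqref{LII}, and $G^{(1)}=\chi_f'/\chi_f$ is single-valued and holomorphic on $D$; this both justifies applying Lemma~\ref{LIB} and yields the asserted holomorphy of $\chi_f^{(r)}/\chi_f$.

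Next I would compute the $G^{(j)}$ explicitly. Writing $\psi=\Gamma'/\Gamma$ and differentiating $\log\chi_f$ via \eqref{XFE} gives
\begin{align*}
G^{(1)}(s)&=2\log(2\pi)-\psi\!\left(1-s+\tfrac{k-1}{2}\right)-\psi\!\left(s+\tfrac{k-1}{2}\right),\\
G^{(j)}(s)&=(-1)^{j}\psi^{(j-1)}\!\left(1-s+\tfrac{k-1}{2}\right)-\psi^{(j-1)}\!\left(s+\tfrac{k-1}{2}\right)\quad(j\geq2).
\end{align*}
Since $k\geq12$, for $\sigma\in[0,1]$ both arguments $a=1-s+\frac{k-1}{2}$ and $b=s+\frac{k-1}{2}$ have real part $\geq\frac{k-1}{2}>0$, so the standard asymptotics $\psi(z)=\log z+O(|z|^{-1})$ and $\psi^{(n)}(z)=O(|z|^{-n})$ $(n\geq1)$ hold uniformly as $|z|\to\infty$ in the right half-plane. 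As $|t|\to\infty$ we have $|a|,|b|=|t|+O(|t|^{-1})$ while $\arg a\to-\frac{\pi}{2}{\rm sgn}(t)$ and $\arg b\to\frac{\pi}{2}{\rm sgn}(t)$; the two argument contributions therefore cancel, giving $\psi(a)+\psi(b)=2\log|t|+O(|t|^{-1})$ and hence
\begin{align*}
G^{(1)}(s)=-2\log\frac{|t|}{2\pi}+O(|t|^{-1}),\qquad G^{(j)}(s)=O(|t|^{-(j-1)})=O(|t|^{-1})\ (j\geq2).
\end{align*}

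Finally I would insert these estimates into \eqref{LII}. In the term indexed by $(l_1,\dots,l_r)$ with $1l_1+\cdots+rl_r=r$, each factor $G^{(1)}$ has size $\log|t|$ and each $G^{(j)}$ with $j\geq2$ has size $|t|^{-1}$. The only index with no higher derivative is $l_1=r$, whose coefficient is $C_{(r,0,\dots,0)}=1$ by Lemma~\ref{LIB}; this term equals $(G^{(1)})^r=(-2\log\frac{|t|}{2\pi})^r+O((\log|t|)^{r-1}|t|^{-1})$. Every other admissible index has $\sum_{j\geq2}l_j\geq1$, forcing $l_1\leq r-2$, so the corresponding term is $O((\log|t|)^{r-2}|t|^{-1})=O((\log|t|)^{r-1}|t|^{-1})$. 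Summing yields the $|t|\gg1$ case. For $|t|\ll1$ with $s\in D$, the function $\chi_f^{(r)}/\chi_f$ is continuous on the relevant compact subset of $D$ and is therefore bounded, giving $O(1)$.

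The main obstacle I anticipate is the simultaneous control of the two polygamma arguments in $G^{(1)}$: one must verify the exact cancellation of the $\arg a$ and $\arg b$ contributions that produces the clean constant $-2\log\frac{|t|}{2\pi}$, and confirm that the remainder is genuinely $O(|t|^{-1})$ uniformly in $\sigma\in[0,1]$. Once the derivative estimates are in place, the combinatorial step—checking that no term of \eqref{LII} other than $l_1=r$ can reach order $(\log|t|)^r$—is routine.
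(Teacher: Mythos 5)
Your proposal is correct and follows essentially the same route as the paper: apply Lemma \ref{LIB} with $F=\chi_f$, compute $G^{(1)}$ via the digamma asymptotics (where the imaginary parts of $\log(1-s+\tfrac{k-1}{2})$ and $\log(s+\tfrac{k-1}{2})$ cancel to give $-2\log\tfrac{|t|}{2\pi}$), bound $G^{(j)}\ll|t|^{-(j-1)}$ for $j\geq2$, and observe that only the index $l_1=r$ with $C_{(r,0,\dots,0)}=1$ contributes the main term. Your added verification of holomorphy and the zero/pole locations of $\chi_f$ is a welcome supplement the paper omits (though note $D$ is in fact simply connected, so a global branch of $\log\chi_f$ does exist), and your use of standard polygamma asymptotics in place of the quoted Stirling integral representation is only a cosmetic difference.
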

\begin{proof}
Apply Lemma \ref{LIB} with $F(s)=\chi_f(s)$ and $G(s)=k\log i+(2s-1)\log{2\pi}+\log\Gamma(1-s+\frac{k-1}{2})-\log\Gamma(s+\frac{k-1}{2})$. Then we have 
\begin{align}
&\hspace{-1em}G^{(1)}(s)\notag\\
=&2\log{2\pi}-\frac{\Gamma'}{\Gamma}(1-s+\tfrac{k-1}{2})-\frac{\Gamma'}{\Gamma}(s+\tfrac{k-1}{2})\notag\\
=&-\log(s+\tfrac{k-1}{2})-\log(1-s+\tfrac{k-1}{2})+\frac{1}{2(s+\tfrac{k-1}{2})}+\frac{1}{2(1-s+\tfrac{k-1}{2})}+ \label{G1F}\\
&+2\log 2\pi+\int_0^\infty\frac{1/2-\{u\}}{(u+s+\frac{k-1}{2})^2}du+\int_0^\infty\frac{1/2-\{u\}}{(u+1-s+\frac{k-1}{2})^2}du\notag\\
=&\begin{cases} \displaystyle -2\log|t|+2\log{2\pi}+O\left(|t|^{-1}\right), & |t|\gg1, \\ O(1), & |t|\ll1 \end{cases} \notag
\end{align}
for $s\in D$ where we used the following formula obtained by Stirling's formula (see \cite[p.342, Theorem A.3.5]{KAR}):
\begin{align*}
\frac{\Gamma'}{\Gamma}(s)=\log s-\frac{1}{2s}-\int_0^\infty\frac{1/2-\{u\}}{(u+s)^2}du
\end{align*}
and the following the approximate formula (see \cite[p.335]{GD1}):
\begin{align*}
\log s=\log|t|+i\frac{\pi}{2}{\rm sgn\;}t+O\left(\frac{1}{|t|}\right), \quad \frac{1}{s}=-\frac{i}{t}+O\left(\frac{1}{|t|^2}\right).
\end{align*}
By differentiating both sides of (\ref{G1F}), for any $j\in\mathbb{Z}_{\geq2}$ and $s\in D$, $G^{(j)}(s)$ is approximated as $G^{(j)}(s)\ll 1/|t|^{j-1}$ when $|t|\gg1$ or $G^{(j)}(s)\ll 1$ when $|t|\gg1$. Since $C_{(r,0,\cdots,0)}=1$, it follows that the main term of $(\chi_f^{(r)}/\chi_f)(s)$ becomes $(G^{(1)}(s))^r$.
\end{proof}

In order to prove Theorem \ref{ATH3}, that is, to obtain the approximate formula of the mean square for $L_f^{(m)}(s)$ as sharp as possible, we divide the characteristic function $\varphi$ as a sum of $\varphi_1$ and $\varphi_2$. For $\varphi\in\mathcal{R}$, $\delta, \delta_1\in(0,1/2)$  such that $\delta<\delta_1<\delta_2$ where $\delta_2=2$, $\varphi_1$ and $\varphi_2$ are defined by
\begin{align}
\varphi_1(\rho)=\begin{cases} 1,  & \rho\in[0,\delta], \\ 0, & \rho\in[\delta_1,\infty), \end{cases} \quad
\varphi_2(\rho)=\begin{cases} 0,  & \rho\in[0,\delta], \\ 1, & \rho\in[\delta_1,1/2],\\ \varphi(\rho) & \rho\in[1/2,\delta_2],  \\ 0, & \rho\in[\delta_2,\infty), \end{cases} \label{CFD}
\end{align}
satisfying $(\varphi_1+\varphi_2)(\rho)=1$ for $\rho\in[\delta,\delta_1]$.
Similarly for $\varphi_0\in\mathcal{R}$,  $\varphi_{01}$ and $\varphi_{02}$ are defined by the above, where $\delta_{01}=\delta_1$ and $\delta_{02}=\delta_2=2$. 
We shall generalize Lemma 7 of p.351 in \cite{GD1}:
\begin{lem}[]\label{GIE}
Fix $\alpha\in\mathbb{Z}_{\geq0}$ and $\beta\in\mathbb{R}_{\geq0}$.
\begin{enumerate}[(a)]
\item \label{XXE} For $X\in\{1,01\}$, we have
\begin{align*}
&\int_1^T\overline{\varphi_X\left(\frac{2\pi n}{t}\right)}\varphi_X\left(\frac{2\pi n}{t}\right)\frac{\left(\log\frac{t}{2\pi}\right)^{\alpha}}{t^{\beta}}dt \\
&=
\begin{cases} 
T^{1-\beta}(\log{T})^\alpha/(1-\beta)+O\left((n^{1-\beta}\log n+T^{1-\beta})(\log T)^{\alpha-1}\right),\hspace{-14em} & \\ 
 & n\in[1,\delta T/2\pi),\; \beta\in[0,1),\; \alpha\in\mathbb{Z}_{\geq1},\\
T^{1-\beta}/(1-\beta)+O(n^{1-\beta}), & n\in[1,\delta T/2\pi),\; \beta\in[0,1),\; \alpha=0, \\
O\left(|\log(T/n)|(\log T)^{\alpha}\right), & n\in[1,\delta T/2\pi),\; \beta=1, \\
O((\log n)^{\alpha}/n^{\beta-1}), & n\in[1,\delta T/2\pi),\; \beta\in(1,\infty),\\
O(n^{1-\beta}(\log n)^\alpha), & n\in[\delta T/2\pi,\delta_1 T/2\pi), \\
0, & n\in[\delta_1T/2\pi,\infty), 
\end{cases}
\end{align*}
\item \label{XYE} For $X\in\{1,2\}$ and $Y\in\{2,02\}$, we have 
\begin{align*}
&\int_1^T \overline{\varphi_X\left(\frac{2\pi n}{t}\right)}\varphi_Y\left(\frac{2\pi n}{t}\right)\frac{\left(\log\frac{t}{2\pi}\right)^\alpha}{t^\beta} dt\\
&= 
\begin{cases}
O(n^{1-\beta}(\log n)^\alpha), & n\in[1,\delta_XT/2\pi), \\ 
 0, & n\in[\delta_XT/2\pi,\infty), \hspace{12em}
\end{cases}
\end{align*}
\item \label{XYN} For $X,Y\in\{1,2,01,02\}$ and $n_1\ne n_2$, we have 
\begin{align*}
&\int_1^T\overline{\varphi_X\left(\frac{2\pi n_1}{t}\right)}\varphi_Y\left(\frac{2\pi n_2}{t}\right)\left(\frac{n_1}{n_2}\right)^{it}\frac{\left(\log\frac{t}{2\pi}\right)^\alpha}{t^{\beta}} dt\\
&=\begin{cases}
0, & n_1\in[\delta_XT/2\pi,\infty), \\
0, & n_2\in[\delta_YT/2\pi,\infty), \\
\displaystyle\frac{\left(\log\frac{T}{2\pi}\right)^\alpha}{iT^\beta}\overline{\varphi_X\left(\frac{2\pi n_1}{T}\right)}\varphi_Y\left(\frac{2\pi n_2}{T}\right)\frac{(n_1/n_2)^{iT}}{\log(n_1/n_2)}+ \hspace{-1em} & \\[0.75em]
+O\left(\dfrac{(\log(\max\{n_1,n_2\}))^\alpha}{({\max\{n_1,n_2}\})^{1+\beta}((\log(n_1/n_2))^{2}}\right), &  n_1, n_2{\rm: otherwise,} \end{cases}
\end{align*}
\item \label{XYM} If there exist $\alpha\in\mathbb{Z}_{\geq0}$ and $\beta\in\mathbb{R}_{\geq0}$ such that $M(t)=O((\log t)^\alpha/t^{\beta})$, then for $X, Y\in\{1,2,01,02\}$ we have
\begin{align*}
&\int_1^T\overline{\varphi_{X}\left(\frac{2\pi n_1}{t}\right)}\varphi_{Y}\left(\frac{2\pi n_2}{t}\right)\left(\frac{n_1}{n_2}\right)^{it}M(t) dt\\
&=\begin{cases} 0, & n_1\in[\delta_XT/2\pi,\infty), \\ 0, &  n_2\in[\delta_YT/2\pi,\infty), \\ O(T^{1-\beta}(\log T)^\alpha), &  n_1, n_2{\rm: otherwise,}\\ & \beta\in[0,1), \\ 
O\left(|\log(T/\max\{n_1,n_2\})|(\log T)^{\alpha}\right), & n_1, n_2{\rm: otherwise,} \\ & \beta=1, \\ O\left((\log(\max\{n_1, n_2\}))^\alpha/(\max\{n_1, n_2\})^{\beta-1}\right), & n_1, n_2{\rm: otherwise,}\\ & \beta\in\mathbb{R}_{>1}. \end{cases}
\end{align*}
\item \label{XYS} For $X\in\{1,01\}, Y\in\{1,2,01,02\}$, we have
\begin{align*}
&\int_1^T\overline{\varphi_{X}\left(\frac{2\pi n_1}{t}\right)}\varphi_{Y}\left(\frac{2\pi n_2}{t}\right)(n_1n_2)^{it}\chi_f^{(\alpha)}(\sigma+it)dt\\
&=\begin{cases} 0, & n_1\in[\delta_XT/2\pi,\infty), \\ 0, &  n_2\in[\delta_YT/2\pi,\infty), \\ O\left(|\log(T/\max\{n_1,n_2\})|(\log T)^{\alpha}\right), & n_1, n_2{\rm:otherwise,}\\ & \sigma=1/2, \\ 
O\left((\log(\max\{n_1, n_2\}))^\alpha/(\max\{n_1, n_2\})^{2\sigma-1}\right), & n_1, n_2{\rm:otherwise,}\\ & \sigma\in(1/2,1]. \end{cases}
\end{align*}
\end{enumerate}
\end{lem}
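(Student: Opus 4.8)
\section*{Proof proposal for Lemma \ref{GIE}}

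The plan is to treat all five estimates uniformly as computations of a single integral $\int_1^T(\cdots)\,dt$, exploiting two features of the divided characteristic functions in \eqref{CFD}: their \emph{support}, and, in the off-diagonal cases, the \emph{oscillation} of the exponential factors. The common first step is a support analysis. Writing $\rho=2\pi n/t$, as $t$ runs over $[1,T]$ the argument $\rho$ decreases from $2\pi n$ to $2\pi n/T$; since $\varphi_X$ is supported in $[0,\delta_X]$ with $\delta_1=\delta_{01}$ and $\delta_2=\delta_{02}=2$, the factor $\varphi_X(2\pi n/t)$ vanishes identically on $[1,T]$ precisely when $2\pi n/T\geq\delta_X$, i.e. $n\geq\delta_XT/2\pi$. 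This accounts at once for every ``$0$'' case in (a)--(e), so in each part it remains only to treat $n$ below the relevant threshold.

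For (a) and (b) there is no oscillation, so I would evaluate directly from an antiderivative of $(\log(t/2\pi))^\alpha t^{-\beta}$, splitting $[1,T]$ according to whether $\overline{\varphi_X}\varphi_Y$ equals $1$ or lies in a transition region $0<\varphi_X<1$. In (a), $\overline{\varphi_X}\varphi_X=\varphi_X^2$ equals $1$ on $\{2\pi n/t\leq\delta\}=\{t\geq 2\pi n/\delta\}$, which reaches $T$ exactly for $n<\delta T/2\pi$; on this range the main region yields the genuine main term $T^{1-\beta}(\log T)^\alpha/(1-\beta)$ when $\beta<1$, the lower endpoint $t\asymp n$ together with the $\alpha$-fold integration by parts in the antiderivative producing the error $O((n^{1-\beta}\log n+T^{1-\beta})(\log T)^{\alpha-1})$, and the cleaner $O(n^{1-\beta})$ when $\alpha=0$. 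The regimes $\beta=1$ and $\beta>1$ follow respectively from $\int(\log(t/2\pi))^\alpha t^{-1}dt$ and from the convergence of $\int t^{-\beta}(\log t)^\alpha dt$ at its lower limit, giving $O(|\log(T/n)|(\log T)^\alpha)$ and $O((\log n)^\alpha n^{1-\beta})$; for $n\in[\delta T/2\pi,\delta_1T/2\pi)$ only the transition region meets $[1,T]$, giving $O(n^{1-\beta}(\log n)^\alpha)$. In (b) at least one factor is of type $2$ or $02$, whose bounded support $\delta_2=2$ confines the whole integrand to $t=O(n)$; hence no $T$-main term survives and the trivial size bound yields $O(n^{1-\beta}(\log n)^\alpha)$.

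The off-diagonal estimates (c) and (d) rest on the oscillatory factor $(n_1/n_2)^{it}$. In (c), with $g(t)=\overline{\varphi_X(2\pi n_1/t)}\varphi_Y(2\pi n_2/t)(\log(t/2\pi))^\alpha t^{-\beta}$ and $\theta=\log(n_1/n_2)\neq0$, I would write $(n_1/n_2)^{it}=e^{it\theta}$ and integrate by parts \emph{twice}: the boundary terms at $t=1$ vanish because $\varphi_X,\varphi_X'$, hence $g,g'$, vanish there (the argument $2\pi n_i$ exceeds $\delta_X$); the first boundary term at $t=T$ is exactly the stated main term carrying one factor $1/\theta$; and the second boundary term together with the twice-differentiated remaining integral are bounded on the support $t=O(\max\{n_1,n_2\})$ by $O\big((\log\max\{n_1,n_2\})^\alpha(\max\{n_1,n_2\})^{-1-\beta}\theta^{-2}\big)$, the second integration being what upgrades $\theta^{-1}$ to $\theta^{-2}$ and $\max^{-\beta}$ to $\max^{-1-\beta}$. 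Part (d) assumes only the size bound $M(t)=O((\log t)^\alpha/t^\beta)$ and offers no usable derivative, so I would estimate trivially by $\int|\varphi_X\varphi_Y|\,|M|\,dt$; the worst case is $X,Y\in\{1,01\}$, whose joint support in $t$ reaches $T$, producing $\int_{\asymp\max}^{T}t^{-\beta}(\log t)^\alpha\,dt$ and thence the three $\beta$-regimes exactly as in (a).

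The main obstacle is part (e), where the oscillation comes from the product $(n_1n_2)^{it}\chi_f^{(\alpha)}(\sigma+it)$. Here I would insert the Stirling form \eqref{STR} of $\chi_f$ together with Lemma \ref{XAE}, so that $\chi_f^{(\alpha)}(\sigma+it)$ is a phase times an amplitude of size $(2\pi)^{2\sigma-1}|t|^{1-2\sigma}(\log(|t|/2\pi))^\alpha$, and the total phase $\Phi(t)$ satisfies $\Phi'(t)=\log\big((2\pi)^2n_1n_2/t^2\big)$. The delicate point is that this is a genuine stationary-phase situation, since $\Phi'$ vanishes when $(2\pi)^2n_1n_2=t^2$. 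The restriction $X\in\{1,01\}$ is precisely what rescues the argument: on the support one has $2\pi n_1/t\leq\delta_1$ and $2\pi n_2/t\leq\delta_Y\leq 2$, so $(2\pi)^2n_1n_2/t^2\leq 2\delta_1<1$ and $\Phi'(t)\leq\log(2\delta_1)<0$ stays bounded away from $0$. A single integration by parts then applies: the $t=1$ boundary term vanishes, the $t=T$ boundary term is $O(T^{1-2\sigma}(\log T)^\alpha)$, and the remaining integral is governed by $\int_{\asymp\max}^{T}t^{-2\sigma}(\log t)^\alpha\,dt$, which gives $O(|\log(T/\max\{n_1,n_2\})|(\log T)^\alpha)$ for $\sigma=1/2$ and $O((\log\max\{n_1,n_2\})^\alpha(\max\{n_1,n_2\})^{-(2\sigma-1)})$ for $\sigma\in(1/2,1]$, matching the claim. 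The genuinely technical work will be to verify, via Lemma \ref{XAE}, that differentiating the amplitude (including the logarithmic-derivative factors of $\chi_f$) does not worsen these bounds.
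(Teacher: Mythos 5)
Your proposal follows essentially the same route as the paper: the support analysis for the zero cases, direct evaluation via the explicit antiderivative of $(\log(t/2\pi))^\alpha t^{-\beta}$ for (a), (b), (d), double integration by parts against $e^{it\log(n_1/n_2)}$ for (c), and for (e) the key observation that $\delta_X\delta_Y<1$ (forced by $X\in\{1,01\}$) keeps the phase derivative $\log((2\pi)^2n_1n_2/t^2)$ bounded away from zero on the range of integration, so a single integration by parts suffices. The only quibble is cosmetic: in (b) it is the vanishing of $\varphi_2$ on $[0,\delta]$ (the lower end of its support), not the upper bound $\delta_2=2$, that confines the integrand to $t\le 2\pi n/\delta=O(n)$, but your conclusion and estimate are the same as the paper's.
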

\begin{proof}
First we consider the case $n_1\in[\delta_X T/2\pi,\infty)$ or $n_2\in[\delta_YT/2\pi,\infty)$. It is clear that $\varphi_X(2\pi n_1/t)=0$ or $\varphi_Y(2\pi n_2/t)=0$ for $t\in[1,T]$. 
Hence, (a)--(e) are true for the above $n_1, n_2$. 
Next we consider the case of $n_1\in[1,\delta_XT/2\pi)$ and $n_2\in[1,\delta_YT/2\pi)$. Then it is clear that $2\pi n_1/\delta_X$, $2\pi n_2/\delta_Y\in[1,T]$. For $t\in[1,2\pi\max(n_1/\delta_X,n_2/\delta_Y))$, we see that $\varphi_X(2\pi n_1/t)=0$ (if $n_1/\delta_X\geq n_2/\delta_Y$) or $\varphi_Y(2\pi n_2/t)=0$ (if $n_1/\delta_X\leq n_2/\delta_Y$). Hence,
\begin{align}
\int_1^T\cdots dt=\int_{2\pi\max(\frac{n_1}{\delta_X}, \frac{n_2}{\delta_Y})}^T\cdots dt. \label{IEA}
\end{align}
Later, we shall approximate the right-hand side of (\ref{IEA}).

First we consider the condition of (a), that is, $X,Y\in\{1,01\}$ and $n_1=n_2=:n$. When $n\in[\delta T/2\pi,\delta_1 T/2\pi)$, we see that $2\pi n/\delta\geq T$. Then the right-hand side of (\ref{IEA}) is estimated as
\begin{align}
\leq\int_{\frac{2\pi n}{\delta_1}}^{\frac{2\pi n}{\delta}}\left|\varphi_X\left(\frac{2\pi n}{t}\right)\right|^2\frac{(\log\frac{t}{2\pi})^\alpha}{t^\beta} dt\ll n^{1-\beta}(\log n)^{\alpha}. \label{IEB}
\end{align}
When $n\in[1,\delta T/2\pi)$, we find that $2\pi n/\delta\in[2\pi n/\delta_1,T]$ and $\varphi_X(2\pi n/t)=1$ for $t\in[2\pi n/\delta,T]$. Hence the right-hand side of (\ref{IEA}) is 
\begin{align}
=\int_{\frac{2\pi n}{\delta_1}}^{\frac{2\pi n}{\delta}}\left|\varphi_X\left(\frac{2\pi n}{t}\right)\right|^2\frac{(\log\frac{t}{2\pi})^\alpha}{t^\beta}dt+\int_{\frac{2\pi n}{\delta}}^T\frac{(\log\frac{t}{2\pi})^\alpha}{t^\beta}dt. \label{IEC}
\end{align}
Here the first term of the right-hand side on (\ref{IEC}) is estimated as
\begin{align}
\ll n^{1-\beta}(\log n)^\alpha, \label{IED}
\end{align}
the second term of the right-hand side on (\ref{IEC}) is
\begin{align}
=\begin{cases} O\left(|\log(T/n)|(\log T)^\alpha\right), & \beta=1, \\ T^{1-\beta}(\log T)^\alpha+O(T^{1-\beta}(\log T)^{\alpha-1})+O(n^{1-\beta}(\log n)^\alpha), & \beta\in[0,1), \\ O((\log n)^\alpha/n^{\beta-1})+O((\log T)^\alpha/T^{\beta-1}), & \beta\in\mathbb{R}_{>1}. \label{IEE}
 \end{cases}
\end{align}
where the following formula was used:
\begin{align}
&\int_M^N\frac{(\log\frac{t}{2\pi})^\alpha}{t^\beta}dt\notag\\
&=
\begin{cases} 
\displaystyle\frac{(\log\frac{N}{M})\left((\log\frac{N}{2\pi})^\alpha+(\log\frac{N}{2\pi})^{\alpha-1}(\log\frac{M}{2\pi})+\cdots+(\log\frac{M}{2\pi})^\alpha\right)}{\alpha+1}, & \beta=1,\\
\displaystyle\sum_{r=0}^\alpha\frac{(-1)^r}{(1-\beta)^{r+1}}\frac{\alpha!}{(\alpha-r)!}\left(\frac{(\log\frac{N}{2\pi})^{\alpha-r}}{N^{\beta-1}}-\frac{(\log\frac{M}{2\pi})^{\alpha-r}}{M^{\beta-1}}\right), & \beta\ne1.\\
\end{cases} \label{IEZ}
\end{align}
Therefore combining (\ref{IEA})--(\ref{IEE}), we obtain (a). 

Next we consider the condition (b), that is, $Y\in\{2,02\}$ and $n_1=n_2=n$. When $n\in[1,\delta_XT/2\pi)\cap[1,\delta_YT/2\pi)$, that is, $n\in[1,\delta_XT/2\pi)$, we see that $2\pi n/\delta\in[2\pi n/\delta_X,T]$ and $\varphi_Y(2\pi n/t)=0$ for $t\in[2\pi n/\delta,T]$. Then the right-hand side of (\ref{IEA}) is
\begin{align}
=\int_{\frac{2\pi n}{\delta_X}}^{\frac{2\pi n}{\delta}}\overline{\varphi_X\left(\frac{2\pi n}{t}\right)}\varphi_Y\left(\frac{2\pi n}{t}\right)\frac{(\log \frac{t}{2\pi})^\alpha}{t^\beta}dt\ll n^{1-\beta}(\log n)^\alpha. \label{IEF}
\end{align}
From (\ref{IEA}) and (\ref{IEF}), (b) is obtained.

We consider the condition of (c), that is, $n_1\ne n_2$, $n_1\in[1,\delta_XT/2\pi)$ and $n_2\in[1,\delta_YT/2\pi)$. By integral by parts, the right-hand side of (\ref{IEA}) is 
\begin{align}
=&\overline{\varphi_X\left(\frac{2\pi n_1}{T}\right)}\varphi_Y\left(\frac{2\pi n_1}{T}\right)\frac{(\log T)^\alpha}{T^\beta}\frac{(n_1/n_2)^{iT}}{i\log(n_1/n_2)}+\notag\\
&+\left(\overline{\varphi_X\left(\frac{2\pi n_1}{t}\right)}\varphi_Y\left(\frac{2\pi n_2}{t}\right)\frac{(\log t)^\alpha}{t^\beta}\right)'_{t=T}\frac{(n_1/n_2)^{iT}}{(\log(n_1/n_2))^2}-\notag\\
&-\frac{1}{(\log(n_1/n_2))^2}\int_{2\pi\max(\frac{n_1}{\delta_X},\frac{n_2}{\delta_Y})}^T\left(\overline{\varphi_X\left(\frac{2\pi n_1}{t}\right)}\varphi_Y\left(\frac{2\pi n_2}{t}\right)\frac{(\log t)^\alpha}{t^\beta}\right)''\times\notag\\&\times\left(\frac{n_1}{n_2}\right)^{it}dt. \label{IEG}
\end{align}
Since $(\varphi_X(2\pi n/t))'
=O(n/t^2)$ and $(\varphi_X(2\pi n/t))''=O(n/t^3)+O(n^2/t^4)$ for $X\in\{1,2,01,02\}$, it follows that
\begin{align*}
&(\cdots)_{t=T}'\ll(n_1+n_2)\frac{(\log T)^\alpha}{T^{\beta+2}}+\frac{(\log T)^{\alpha-1}}{T^{\beta+1}}+\frac{(\log T)^\alpha}{T^{\beta+1}}\ll\frac{(\log T)^\alpha}{T^{\beta+1}},\\
&(\cdots)''\ll(n_1+n_2)\frac{(\log t)^\alpha}{t^{\beta+3}}+(n_1^2+n_2^2)\frac{(\log t)^\alpha}{t^{\beta+4}}+n_1n_2\frac{(\log t)^\alpha}{t^{\beta+4}}\ll\frac{(\log t)^\alpha}{t^{\beta+2}}.
\end{align*}
Hence the second term of the right-hand side of (\ref{IEG}) is estimated as
\begin{align}
\ll\frac{(\log T)^\alpha}{T^{\beta+1}(\log(n_1/n_2))^2}\ll\frac{(\log\max(n_1,n_2))^\alpha}{(\max(n_1,n_2))^{\beta+1}(\log(n_1/n_2))^2}, \label{IEH}
\end{align}
and the third term of the right-hand side of (\ref{IEG}) is estimated as
\begin{align}
\ll\frac{1}{(\log(n_1/n_2))^2}\int_{2\pi\max(\frac{n_1}{\delta_X},\frac{n_2}{\delta_Y})}^T\frac{(\log t)^{\alpha}}{t^{\beta+2}}dt\ll\frac{(\log\max(n_1,n_2))^\alpha}{(\max(n_1,n_2))^{\beta+1}(\log(n_1/n_2))^2}. \label{IEI}
\end{align}
Combining (\ref{IEA}) and (\ref{IEG})--(\ref{IEI}), we obtain (c). 

Next we consider the condition of (d), that is, $n_1\in[1,\delta_XT/2\pi)$ and $n_2\in[1,\delta_YT/2\pi)$. Then (\ref{IEZ}) gives that the right-hand side of (\ref{IEA}) is estimated as
\begin{align*}
&\ll\int_{2\pi\max(\frac{n_1}{\delta_X},\frac{n_2}{\delta_Y})}^T\frac{(\log t)^\alpha}{t^\beta}dt\\
&\ll\begin{cases} T^{1-\beta}(\log T)^\alpha, & \beta\in[0,1), \\ |\log(T/\max(n_1,n_2))|(\log T)^\alpha, & \beta=1, \\ (\log\max(n_1,n_2)^\alpha)/(\max(n_1,n_2))^{\beta-1}, & \beta\in\mathbb{R}_{>1}. \end{cases}
\end{align*}
Thus (d) is obtained. 

Finally we consider the condition of (e), that is, $X\in\{1,01\}$, $n_1\in[1,\delta_XT/2\pi)$ and $n_2\in[1,\delta_YT/2\pi)$. 
Using (\ref{STR}) and Lemma \ref{XAE}, we have 
\begin{align}
(n_1n_2)^{it}\chi_f^{(\alpha)}(s)=&(-1)^{-\frac{k}{2}}(-2)^\alpha(2\pi)^{2\sigma-1}e^{i\frac{\pi}{2}(1-k){\rm sgn}(t)}
\times\notag\\
&\times
e^{-2t\log\frac{|t|}{2\pi e\sqrt{n_1n_2}}}|t|^{1-2\sigma}\left(\log\frac{|t|}{2\pi}\right)^\alpha+M_1(t) \label{IEJ}
\end{align}
where $M_1(t)=O((\log|t|)^\alpha/|t|^{2\sigma})$. Since we have $\delta_X\delta_Y<1$, it follows that $$2\pi\max(n_1/\delta_X,n_2/\delta_Y) \geq 2\pi\sqrt{(n_1n_2)/(\delta_X\delta_Y)}> 2\pi\sqrt{n_1n_2}.$$ Therefore we see that $|\log(2\pi\sqrt{n_1n_2}/t)|>-\log(\sqrt{\delta_X\delta_Y})>0$ and 
\begin{align}
e^{-i2t\log\frac{t}{2\pi e\sqrt{n_1n_2}}}=\left(\frac{e^{-i2t\log\frac{t}{2\pi e\sqrt{n_1n_2}}}}{2i\log\frac{2\pi\sqrt{n_1n_2}}{t}}\right)'-\frac{e^{-i2t\log\frac{t}{2\pi e\sqrt{n_1n_2}}}}{2it(\log\frac{2\pi\sqrt{n_1n_2}}{t})^2} \label{IEK}
\end{align}
for $t\in[2\pi\max(n_1/\delta_X,n_2/\delta_Y),T]$. 
By (\ref{IEJ}) and (\ref{IEK}), the right-hand side of (\ref{IEA}) is estimated as
\begin{align}
=&(-1)^{-\frac{k}{2}}(-2)^\alpha(2\pi)^{2\sigma-1}e^{i\frac{\pi}{2}(1-k)}\times\notag\\
&\times\int_{2\pi\max(\frac{n_1}{\delta_X},\frac{n_2}{\delta_Y})}^T\overline{\varphi_X\left(\frac{2\pi n}{t}\right)}\varphi_Y\left(\frac{2\pi n}{t}\right)\frac{(\log\frac{t}{2\pi})^\alpha}{t^{2\sigma-1}}\left(\frac{e^{-i2t\log\frac{t}{2\pi e\sqrt{n_1n_2}}}}{2i\log\frac{2\pi\sqrt{n_1n_2}}{t}}\right)'dt\notag\\
&+O\left(\int_{2\pi\max(\frac{n_1}{\delta_X},\frac{n_2}{\delta_Y})}^T\overline{\varphi_X\left(\frac{2\pi n}{t}\right)}\varphi_Y\left(\frac{2\pi n}{t}\right)M_2(t)dt\right), \label{IEL}
\end{align}
where $M_2(t)=O((\log t)^\alpha/t^{2\sigma})$.
From (d), the second term of the right-hand side of (\ref{IEL}) is estimated as
\begin{align}
\ll\begin{cases} |\log(T/\max(n_1,n_2))|(\log T)^\alpha, & \sigma=1/2, \\ (\log\max(n_1,n_2))^\alpha/(\max(n_1,n_2))^{2\sigma-1}, & \sigma\in(1/2,1]. \end{cases} \label{IEM}
\end{align}
Integration by parts and (\ref{IEZ}) give that the first term of the right-hand side of (\ref{IEL}) is
\begin{align}
=&\frac{(-2)^\alpha(2\pi)^{2\sigma-1}}{(-1)^{\frac{k}{2}}e^{i\frac{\pi}{2}(k-1)}}\left(\overline{\varphi_X\left(\frac{2\pi n}{T}\right)}\varphi_Y\left(\frac{2\pi n}{T}\right)\frac{(\log\frac{T}{2\pi})^\alpha}{T^{2\sigma-1}}\frac{e^{-i2t\log\frac{T}{2\pi e\sqrt{n_1n_2}}}}{2i\log\frac{2\pi\sqrt{n_1n_2}}{T}}+\right.\notag\\
&-\left.\int_{2\pi\max(\frac{n_1}{\delta_X},\frac{n_2}{\delta_Y})}^T\left(\overline{\varphi_X\left(\frac{2\pi n}{t}\right)}\varphi_Y\left(\frac{2\pi n}{t}\right)\frac{(\log\frac{t}{2\pi})^\alpha}{t^{2\sigma-1}}\right)'\frac{e^{-i2t\log\frac{t}{2\pi e\sqrt{n_1n_2}}}}{2i\log\frac{2\pi\sqrt{n_1n_2}}{t}}dt\right)\notag\\
\ll&\frac{(\log T)^\alpha}{T^{2\sigma-1}}+\int_{2\pi\max(\frac{n_1}{\delta_X},\frac{n_2}{\delta_Y})}^T\frac{(\log t)^{\alpha}}{t^{2\sigma}}dt\notag\\
\ll& \begin{cases} |\log(T/\max(n_1,n_2))|(\log T)^\alpha, & \sigma=1/2, \\ (\log\max(n_1,n_2))^\alpha/(\max(n_1,n_2))^{2\sigma-1}, & \sigma\in(1/2,1], \end{cases} \label{IEN}
\end{align}
where the following estimate was used:
\begin{align*}
(\cdots)'\ll (n_1+n_2)\frac{(\log t)^\alpha}{t^{2\sigma+1}}+\frac{(\log t)^\alpha}{t^{2\sigma}}+\frac{(\log t)^{\alpha-1}}{t^{2\sigma}}\ll\frac{(\log t)^\alpha}{t^{2\sigma}}.
\end{align*}
Combining (\ref{IEA}) and (\ref{IEL})--(\ref{IEN}), we get (e). 
\end{proof}

After using Lemma \ref{GIE}, we shall estimate the following sums:
\begin{lem}\label{G6A}\label{G6B}
For $x\in\mathbb{R}_{\geq2}$, $r_1, r_2\in\mathbb{Z}_{\geq0}$ and complex valued arithmetic functions $\alpha, \beta$ such that $\alpha(n)\ll|\lambda_f(n)|$, $\beta(n)\ll|\lambda_f(n)|$, we have
\begin{enumerate}[(a)]
\item $\displaystyle\sum_{n_1\leq n_2\leq x}\frac{|\lambda_f(n_1)\lambda_f(n_2)|(\log{n_1})^{r_1}(\log{n_2})^{r_2}}{(n_1n_2)^\sigma}\\ \ll\begin{cases} x^{2(1-\sigma)}(\log x)^{r_1+r_2}, & \sigma\in[1/2,1), \\ (\log x)^{r_1+r_2+2}, & \sigma=1, \end{cases}$
\item $\displaystyle\sum_{n_1\leq n_2\leq x}\frac{|\lambda_f(n_1)\lambda_f(n_2)|(\log{n_1})^{r_1}(\log{n_2})^{r_2}}{(n_1n_2)^\sigma}\left|\log\frac{x}{n_2}\right|\ll \frac{(\log x)^{r_1+r_2}}{x^{2(\sigma-1)}}\\$ for $\sigma\in[1/2,1)$, 
\item $\displaystyle \sum_{n_1<n_2\leq x}\frac{|\alpha(n_1)\beta(n_2)|(\log n_1)^{r_1}(\log n_2)^{r_2}}{(n_1n_2)^\sigma{n_2}(\log(n_1/n_2))^2}\\\ll
\begin{cases}
x^{2(1-\sigma)}(\log x)^{r_1+r_2}, & \sigma\in[1/2,1), \\ (\log x)^{r_1+r_2+2}, & \sigma=1,
\end{cases}$
\item $\displaystyle \sum_{\begin{subarray}{c}n_1,n_2\leq x,\\ n_1\ne n_2 \end{subarray}}
\frac{\overline{\alpha(n_1)}\beta(n_2)(\log n_1)^{r_1}(\log n_2)^{r_2}}{(n_1n_2)^\sigma\log(n_1/n_2)}\\\ll\begin{cases} x^{2(1-\sigma)}(\log x)^{r_1+r_2}, & \sigma\in[1/2,1), \\ (\log x)^{r_1+r_2+2}, & \sigma=1. \end{cases}$
\end{enumerate}
\end{lem}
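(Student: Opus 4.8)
The plan is to reduce every sum to the single coefficient $|\lambda_f|$ and to rest everything on one workhorse estimate. Cauchy--Schwarz together with Rankin's bound \eqref{RAN} gives $\sum_{n\le t}|\lambda_f(n)|\le t^{1/2}\bigl(\sum_{n\le t}|\lambda_f(n)|^2\bigr)^{1/2}\ll t$, and feeding this into partial summation against the weight $(\log t)^{s}/t^{\tau}$ yields, for every $s\in\mathbb{Z}_{\ge0}$,
\begin{align*}
\sum_{n\le y}\frac{|\lambda_f(n)|(\log n)^{s}}{n^{\tau}}\ll\begin{cases} y^{1-\tau}(\log y)^{s}, & \tau\in[0,1),\\ (\log y)^{s+1}, & \tau=1.\end{cases}
\end{align*}
Call this single-sum bound $(\star)$; it is the engine for (a)--(c), and in (c)--(d) I first replace $\alpha,\beta$ by $|\lambda_f|$ using $\alpha(n),\beta(n)\ll|\lambda_f(n)|$.

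For (a) I fix $n_2$ and apply $(\star)$ with $\tau=\sigma$ to the inner sum over $n_1\le n_2$, which contributes $\ll n_2^{1-\sigma}(\log n_2)^{r_1}$ when $\sigma<1$ and $\ll(\log n_2)^{r_1+1}$ when $\sigma=1$. Absorbing this factor turns the remaining $n_2$-sum into $\sum_{n_2\le x}|\lambda_f(n_2)|(\log n_2)^{r_1+r_2}/n_2^{2\sigma-1}$, to which $(\star)$ applies once more with $\tau=2\sigma-1\in[0,1]$, producing exactly the two stated bounds. For (b) the inner sum is again cleared by $(\star)$, which reduces matters to $\int_1^x\log(x/t)\,d\Phi(t)$ with $\Phi(t)=\sum_{n\le t}|\lambda_f(n)|(\log n)^{r_1+r_2}n^{-(2\sigma-1)}\ll t^{2(1-\sigma)}(\log t)^{r_1+r_2}$; since the boundary terms vanish and $\tfrac{d}{dt}\log(x/t)=-1/t$, integration by parts collapses this to $\int_1^x\Phi(t)/t\,dt\ll x^{2(1-\sigma)}(\log x)^{r_1+r_2}$, so the weight $|\log(x/n_2)|$ costs no power of $\log x$.

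Part (c) I split at $n_1=n_2/2$. On the far range $n_1<n_2/2$ one has $(\log(n_1/n_2))^{-2}=O(1)$ and the surviving $1/n_2$ only helps, so this piece is dominated by an (a)-type sum. On the near range $n_2/2\le n_1<n_2$ I set $h=n_2-n_1$ and use $(\log(n_1/n_2))^{-2}\asymp(n_2/h)^2$; combined with $1/n_2$ this gives the kernel $n_2/h^2$. Interchanging summation so that $h$ is summed last, a Cauchy--Schwarz in $n_2$ backed by Rankin-based partial summation bounds the shifted sum $\sum_{n_2}|\lambda_f(n_2)||\lambda_f(n_2-h)|(\log n_2)^{r_1+r_2}/n_2^{2\sigma-1}$ by $x^{2(1-\sigma)}(\log x)^{r_1+r_2}$ uniformly in $h$; as $\sum_h h^{-2}$ converges, the $h$-summation is free and (c) follows, the $\sigma=1$ case being identical and yielding the stated $(\log x)^{r_1+r_2+2}$.

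The genuine obstacle is (d), where the kernel $1/\log(n_1/n_2)$ is only a first power and is \emph{not} summable across the diagonal: the model sum $\sum_{n_1\ne n_2}(n_1n_2)^{-\sigma}|\log(n_1/n_2)|^{-1}$ is truly of size $x^{2(1-\sigma)}\log x$, so bounding the summand in absolute value would lose a factor $\log x$. The required saving comes from the fact that $\overline{\alpha(n_1)}\beta(n_2)$ has rank-one shape while the sign pattern of $1/\log(n_1/n_2)$ does not factor through the two variables; this is precisely the situation of the Montgomery--Vaughan form of Hilbert's inequality underlying Good's Lemma~6. With abscissae $\log n$ the minimal gap is $\delta_n=\min_{m\ne n}|\log(m/n)|\asymp 1/n$, so its bilinear shape reads
\begin{align*}
\Bigl|\sum_{\substack{n_1,n_2\le x\\ n_1\ne n_2}}\frac{a_{n_1}b_{n_2}}{\log(n_1/n_2)}\Bigr|\ll\Bigl(\sum_{n_1\le x}n_1|a_{n_1}|^2\Bigr)^{1/2}\Bigl(\sum_{n_2\le x}n_2|b_{n_2}|^2\Bigr)^{1/2}.
\end{align*}
Choosing $a_{n_1}=\overline{\alpha(n_1)}(\log n_1)^{r_1}n_1^{-\sigma}$ and $b_{n_2}=\beta(n_2)(\log n_2)^{r_2}n_2^{-\sigma}$ and invoking $|\alpha|,|\beta|\ll|\lambda_f|$, each factor becomes $\sum_{n\le x}|\lambda_f(n)|^2(\log n)^{2r_i}/n^{2\sigma-1}$, which by \eqref{RAN} and partial summation is $\ll x^{2(1-\sigma)}(\log x)^{2r_i}$ for $\sigma<1$ and $\ll(\log x)^{2r_i+1}$ for $\sigma=1$. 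Multiplying the two square roots gives exactly $x^{2(1-\sigma)}(\log x)^{r_1+r_2}$ when $\sigma<1$, and $(\log x)^{r_1+r_2+1}\le(\log x)^{r_1+r_2+2}$ when $\sigma=1$. I expect the only delicate point to be justifying the Montgomery--Vaughan input (the $1/n$ spacing and the passage to the bilinear form); the remaining estimates are routine partial summation against \eqref{RAN}.
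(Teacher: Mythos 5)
Your argument is correct, and for parts (a)--(c) it is essentially the same mathematics the paper relies on; the difference is one of packaging. The paper's proof is a two-line citation: it pulls the weights out via $(\log n_1)^{r_1}(\log n_2)^{r_2}\ll(\log x)^{r_1+r_2}$ (for (a), (b)) or absorbs them into the coefficients $\alpha_{n_1},\beta_{n_2}$ (for (c), (d)) and then quotes the bounds for $R_\sigma$, $S_\sigma$, $T_\sigma$, $U_\sigma$ from Good's Lemma 6, whereas you reconstruct those bounds from scratch: the workhorse estimate $(\star)$ from Rankin plus partial summation, the vanishing boundary term in (b), the near/far split about $n_1=n_2/2$ with $|\log(n_1/n_2)|\asymp h/n_2$ in (c), and the Hilbert-type inequality in (d). These are exactly the ingredients inside Good's lemma, so your route buys self-containedness at the cost of length, and your versions of (c) and (d) are in fact slightly sharper at $\sigma=1$ (exponent $r_1+r_2+1$ rather than $r_1+r_2+2$). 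The one point you flag as delicate --- the bilinear, $\delta_n$-weighted Montgomery--Vaughan inequality with $\lambda_n=\log n$ and $\delta_n\asymp 1/n$ --- is genuinely the crux of (d), since the signed kernel $1/\log(n_1/n_2)$ is not absolutely summable there, but it is sound: the bilinear form follows from the usual quadratic form $\bigl|\sum_{r\ne s}u_r\overline{u_s}/(\lambda_r-\lambda_s)\bigr|\le\tfrac{3\pi}{2}\sum_r|u_r|^2\delta_r^{-1}$ because the kernel matrix $M$ is real antisymmetric, so $iD^{1/2}MD^{1/2}$ (with $D=\mathrm{diag}(\delta_r)$) is Hermitian and its numerical radius equals its operator norm; this is also the mechanism behind Good's estimate for $U_\sigma$. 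If you write this up, state that reduction explicitly (or cite the bilinear form directly); everything else is routine and matches the paper's intended argument.
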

\begin{proof}
Using the fact $(\log{n_1})^{r_1}(\log{n_2})^{r_2}\ll(\log x)^{r_1+r_2}$ for $n_1, n_2\leq x$ and the estimates of $R_\sigma(x)$ and $S_\sigma(x)$ in \cite[p.348, LEMMA 6]{GD1}, we obtain (a) and (b).
By the same discussion for $T_\sigma(x)$ and $U_\sigma(x)$ with $\alpha_{n_1}=\alpha(n_1)(\log n_1)^{r_1}$, $\beta_{n_2}=\beta(n_2)(\log n_2)^{r_2}$, $a_{n_1}=\lambda_f(n_1)(\log n_2)^{r_1}$, $b_{n_2}=\lambda_f(n_2)(\log n_2)^{r_2}$ in \cite[p.348, LEMMA 6]{GD1}, (c) and (d) are obtained.
\end{proof}

\section{Proof of Theorem \ref{ATH1}}\label{AMP1}
First we shall show the following formula: 
\begin{prop}\label{MAFE}
For $s=\sigma+it$ such that $\sigma\in[0,1]$ and $|t|\gg 1$, $\varphi\in\mathcal{R}$, $x\in\mathbb{R}_{>0}$, and fixed $l\in\mathbb{Z}_{\geq(l+1)/2}$, we have
\begin{align*}
L_f^{(m)}(s)=G_m(s,x;\varphi)+\chi_f(s)\sum_{r=0}^m(-1)^r\binom{m}{r}G_r\left(1-s,\frac{1}{x};\varphi_0\right),
\end{align*}
where $G_r(s,x;\varphi) \ (r\in\{0,\dots,m\})$ are given by
\begin{align*}
G_r(s,x;\varphi)
=&\frac{1}{2\pi i}\int_{(\frac{3}{2}-\sigma)}\frac{\chi_f^{(m-r)}}{\chi_f}(1-s-w)L_f^{(r)}(s+w)\frac{K_\varphi(w)}{w}\times\\
&\times\frac{\Gamma(s+w+\frac{k-1}{2})}{\Gamma(s+\frac{k-1}{2})}\left(\frac{x}{2\pi}e^{-i\frac{\pi}{2}{\rm sgn\;}t}\right)^wdw. 
\end{align*}
\end{prop}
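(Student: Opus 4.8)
The plan is to start from the integral representation of $G_m(s,x;\varphi)$ on the line $\mathrm{Re}\,w=\tfrac32-\sigma$ and to shift the contour leftward to $\mathrm{Re}\,w=-\tfrac12-\sigma$, collecting the residue that is crossed. On the initial line $\mathrm{Re}(s+w)=\tfrac32>1$, so $L_f^{(m)}(s+w)$ is given by its absolutely convergent Dirichlet series and is bounded; combined with the rapid decay $K_\varphi(w)/w\ll_l|w|^{-(l+1)}$ on vertical lines (which follows from \eqref{KEP}, since $\varphi^{(l+1)}$ is compactly supported away from $0$) and the bound \eqref{GEE} for the $\Gamma$-quotient, the integrand is absolutely integrable and its horizontal contributions vanish, so the contour shift is justified. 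In $G_m$ the factor $(\chi_f^{(m-m)}/\chi_f)(1-s-w)\equiv1$, so the integrand carries no pole from the $\chi$-quotient.

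Between the two vertical lines the only pole is the simple pole of $K_\varphi(w)/w$ at $w=0$: $L_f^{(m)}(s+w)$ is entire, while the poles of $\Gamma(s+w+\tfrac{k-1}2)/\Gamma(s+\tfrac{k-1}2)$ sit at $w=-s-\tfrac{k-1}2-\mathbb{Z}_{\ge0}$, whose real parts are $\le-\tfrac{k-1}2<-\tfrac12-\sigma$ for $k\ge12$ and so lie to the left of the new line. By Lemma \ref{EXE}, $K_\varphi(0)=1$, hence the residue of the whole integrand at $w=0$ equals $L_f^{(m)}(s)$, and
\begin{align*}
G_m(s,x;\varphi)=L_f^{(m)}(s)+\frac{1}{2\pi i}\int_{(-\frac12-\sigma)}L_f^{(m)}(s+w)\frac{K_\varphi(w)}{w}\frac{\Gamma(s+w+\frac{k-1}2)}{\Gamma(s+\frac{k-1}2)}\Bigl(\tfrac{x}{2\pi}e^{-i\frac\pi2{\rm sgn}(t)}\Bigr)^{w}dw.
\end{align*}

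On the new line $\mathrm{Re}(1-s-w)=\tfrac32>1$, so I would substitute the differentiated functional equation \eqref{3DFE} at the point $s+w$, namely $L_f^{(m)}(s+w)=\sum_{r=0}^m\binom{m}{r}(-1)^r\chi_f^{(m-r)}(s+w)L_f^{(r)}(1-s-w)$, and then change variables $w\mapsto-w$, which carries $\mathrm{Re}\,w=-\tfrac12-\sigma$ onto $\mathrm{Re}\,w=\tfrac12+\sigma$, the defining line of $G_r(1-s,\tfrac1x;\varphi_0)$. Writing $\chi_f^{(m-r)}(s-w)=(\chi_f^{(m-r)}/\chi_f)(s-w)\,\chi_f(s-w)$ and using \eqref{KFE} in the form $K_\varphi(-w)=K_{\varphi_0}(w)$, the heart of the computation is the elementary identity, verified from the explicit form \eqref{XFE} of $\chi_f$,
\begin{align*}
\chi_f(s-w)\frac{\Gamma(s-w+\frac{k-1}2)}{\Gamma(s+\frac{k-1}2)}\Bigl(\tfrac{x}{2\pi}e^{-i\frac\pi2{\rm sgn}(t)}\Bigr)^{-w}=\chi_f(s)\frac{\Gamma(1-s+w+\frac{k-1}2)}{\Gamma(1-s+\frac{k-1}2)}\Bigl(\tfrac{1}{2\pi x}e^{i\frac\pi2{\rm sgn}(t)}\Bigr)^{w},
\end{align*}
in which the spurious powers $(2\pi)^{\pm2w}$ and the phases cancel; here one uses that $\mathrm{Im}(1-s)=-t$, so the power occurring in $G_r(1-s,\tfrac1x;\varphi_0)$ indeed carries $e^{i\frac\pi2{\rm sgn}(t)w}$. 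Since the change of variables turns $K_\varphi(w)/w$ into $-K_{\varphi_0}(w)/w$, the integrand becomes exactly $-\chi_f(s)\sum_r(-1)^r\binom{m}{r}$ times the $r$-th integrand of $G_r(1-s,\tfrac1x;\varphi_0)$, so the residual integral equals $-\chi_f(s)\sum_{r=0}^m(-1)^r\binom{m}{r}G_r(1-s,\tfrac1x;\varphi_0)$. Inserting this into the displayed identity for $G_m(s,x;\varphi)$ and rearranging gives the Proposition.

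The convergence and contour-shift steps are routine given \eqref{KEP} and \eqref{GEE}. The main obstacle is the reassembly step: one must track the interplay of the functional-equation factor \eqref{XFE}, the $\Gamma$-quotient, the scaling $x/2\pi$ versus $1/(2\pi x)$, and the phase $e^{-i\frac\pi2{\rm sgn}(t)}$ simultaneously under $w\mapsto-w$ and under $s\mapsto1-s$ (where ${\rm sgn}$ flips because $\mathrm{Im}(1-s)=-t$), confirming that every extraneous power of $2\pi$ and every phase cancels so that precisely the integrand of $G_r(1-s,\tfrac1x;\varphi_0)$ reappears, carrying the single overall sign supplied by \eqref{KFE}.
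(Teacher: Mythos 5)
Your proposal is correct and follows essentially the same route as the paper: a contour shift from $\mathrm{Re}\,w=\tfrac32-\sigma$ to $\mathrm{Re}\,w=-\tfrac12-\sigma$ picking up the residue $L_f^{(m)}(s)$ at $w=0$ via $K_\varphi(0)=1$, followed by the differentiated functional equation \eqref{3DFE}, the identity \eqref{KFE}, the substitution $w\mapsto-w$, and exactly the $\chi_f$-reassembly identity you display (which checks out, including the sign flip of ${\rm sgn}$ and the cancellation of the powers of $2\pi$). The only point treated more carefully in the paper is the vanishing of the horizontal segments: there $\mathrm{Re}(s+w)$ runs through the critical strip where the Dirichlet series diverges, so one needs the Phragm\'en--Lindel\"of bound $L_f^{(m)}(s+w)\ll|v|^{3/2-(\sigma+u)}(\log|v|)^m$ together with $l\ge(k+1)/2$ to beat the growth $|v|^{\sigma+u-1/2+(k-1)/2}$ of the $\Gamma$-quotient, rather than boundedness on the initial line alone.
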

\begin{proof}
First we shall show that the integral 
\begin{align}
\frac{1}{2\pi i}\int_{-\frac{1}{2}-\sigma\pm iv}^{\frac{3}{2}-\sigma\pm iv}L_f^{(m)}(s+w)\frac{K_\varphi(w)}{w}\frac{\Gamma(s+w+\frac{k-1}{2})}{\Gamma(s+\frac{k-1}{2})}\left(\frac{x}{2\pi}e^{-i\frac{\pi}{2}{\rm sgn\;}t}\right)^wdw \label{HE0}
\end{align}
vanishes as $|v|\to\infty$ for $l\in\mathbb{Z}_{\geq (k+1)/2}$.  Write $w=u+iv$ and choose $|v|\gg|t|+1$, then $|s+w|\gg|t+v|\gg1$. 
Using (\ref{STR}), (\ref{3DFE}) and Lemma \ref{XAE} we have
\begin{align*}
L_f^{(m)}(s)\ll\sum_{r=0}^m|t|^{1-2\sigma}(\log|t|)^{m-r}|L_f^{(r)}(1-s)|\ll|t|^{1-2\sigma}(\log|t|)^m \quad (|t|\to\infty) 
\end{align*}
for $\text{Re\;}s<0$.
Hence the Phragm\'{e}n-Lindel\"{o}f theorem gives 
\begin{align}
L_f^{(m)}(s+w)&\ll|t+v|^{\frac{3}{2}-(\sigma+u)}(\log |t+v|)^m\notag\\&\ll|v|^{\frac{3}{2}-(\sigma+u)}(\log|v|)^m \quad (|v|\to\infty) \label{HE1}
\end{align}
uniformly for $\sigma+u\in[-1/2,3/2]$. Using (\ref{KEP}) and (\ref{GEE}) we see that
\begin{align}
&\frac{K_\varphi(w)}{w}\times\frac{\Gamma(s+w+\frac{k-1}{2})}{\Gamma(s+\frac{k-1}{2})}\left(\frac{x}{2\pi}e^{-i\frac{\pi}{2}{\rm sgn\;}t}\right)^w\notag\\
&\ll\frac{\|\varphi^{(l+1)}\|_1}{|v|^{l+1}}\times\frac{(1+|t+v|)^{\sigma+u-\frac{1}{2}+\frac{k-1}{2}}}{|t|^{\sigma-\frac{1}{2}+\frac{k-1}{2}}}\ll|v|^{\sigma+u-\frac{3}{2}+\frac{k-1}{2}-l} \quad (|v|\to\infty) \label{HE2}
\end{align}
uniformly for $\sigma+u\in[-1/2,3/2]$. From (\ref{HE1}) and (\ref{HE2}), the integral (\ref{HE0}) is $\ll |v|^{\frac{k-1}{2}-l}(\log|v|)^m$, that is,  (\ref{HE0}) tends to $0$ as $|v|\to\infty$ when $l\in\mathbb{Z}_{\geq(k+1)/2}$.

Using the above fact, $K_\varphi(0)=1$ and applying Cauchy's residue theorem, we have
\begin{align}
L_f^{(m)}(s)=&\frac{1}{2\pi i}\left(\int_{(\frac{3}{2}-\sigma)}-\int_{(-\frac{1}{2}-\sigma)}\right)L_f^{(m)}(s+w)\frac{K_\varphi(w)}{w}\frac{\Gamma(s+w+\frac{k-1}{2})}{\Gamma(s+\frac{k-1}{2})}\times\notag\\
&\times\left(\frac{x}{2\pi}e^{-i\frac{\pi}{2}{\rm sgn\;}t}\right)^wdw. 
\label{AF1}
\end{align}
for $l\in\mathbb{Z}_{\geq(k+1)/2}$. Clearly, the first term of the right-hand side of (\ref{AF1}) is
\begin{align}
=G_m(s,x;\varphi).
\end{align} 
We consider the second term of the right-hand side of (\ref{AF1}). Now we can calculate   
\begin{align}
&\hspace{-1em}L_f^{(m)}(s+w)\frac{\Gamma(s+w+\frac{k-1}{2})}{\Gamma(s+\frac{k-1}{2})}\notag\\
=&\frac{\Gamma(s+w+\frac{k-1}{2})}{\Gamma(s+\frac{k-1}{2})}\chi_f(s+w)\sum_{r=0}^m(-1)^{r}\binom{m}{r}\frac{\chi_f^{(m-r)}}{\chi_f}(s+w)L_f^{(r)}(1-s-w)\notag\\
=&\chi_f(s)(2\pi)^{2w}\frac{\Gamma(1-s+w+\frac{k-1}{2})}{\Gamma(1-s+\frac{k-1}{2})}\sum_{r=0}^m(-1)^{r}\binom{m}{r}\frac{\chi_f^{(m-r)}}{\chi_f}(s+w)\times \label{LGG}\\
&\times L_f^{(r)}(1-s-w) \notag
\end{align}
where we used (\ref{XFE}) and (\ref{3DFE}) which give that  
\begin{align*}
\frac{\chi_f(s+w)}{\chi_f(s)}=(2\pi)^{2w}\frac{\Gamma(s+w)}{\Gamma(s+w+\frac{k-1}{2})}\frac{\Gamma(1-s-w+\frac{k-1}{2})}{\Gamma(1-s+\frac{k-1}{2})}. 
\end{align*}
Using (\ref{KFE}), (\ref{LGG}) and transforming $w\mapsto -w$, we see that the second term of the right-hand side of (\ref{AF1}) is
\begin{align}
=&-\frac{\chi_f(s)}{2\pi i}\int_{-(\frac{1}{2}+\sigma)}\frac{K_{\varphi}(-w)}{-w}\frac{\Gamma(1-s+w+\frac{k-1}{2})}{\Gamma(1-s+\frac{k-1}{2})}\left(2\pi xe^{-i\frac{\pi}{2}{\rm sgn}(t)}\right)^{-w}\times\notag\\
&\times \sum_{r=0}^m(-1)^r\binom{m}{r}\frac{\chi_f^{(m-r)}}{\chi_f}(s-w)L_f^{(r)}(1-s+w)(-dw)\notag\\
=&\chi_f(s)\sum_{r=0}^m(-1)^{r}\binom{m}{r}G_r\left(1-s,\frac{1}{x};\varphi_0\right).\label{AF2}
\end{align}
By (\ref{AF1})--(\ref{AF2}) Proposition \ref{MAFE} is showed. 
\end{proof}

Next, the approximate formula of $G_r(s,x;\varphi)$ is written as follows:
\begin{prop}\label{PAFE}
For $s=\sigma+it$ such that $\sigma\in[0,1]$ and $|t|\gg 1$, $\varphi\in\mathcal{R}$, $x, y\in\mathbb{R}_{>0}$ satisfying $x/(2\pi y)=1/|t|$, fixed $r\in\{0,\cdots,m\}$ and $l\in\mathbb{Z}_{\geq(k+1)/2}$, we have
\begin{align*}
G_r(s,x;\varphi)=&\sum_{n=1}^\infty\frac{\lambda_f(n)(-\log n)^r}{n^s}\sum_{j=0}^l\varphi^{(j)}\left(\frac{n}{y}\right)\left(-\frac{n}{y}\right)^j\gamma_{j}^{(m-r)}\left(s,\frac{1}{|t|}\right)+\\
&+O(y^{1-\sigma}(\log y)^r(\log|t|)^{m-r}|t|^{-\frac{l}{2}}\|\varphi^{(l+1)}\|_1).
\end{align*}
\end{prop}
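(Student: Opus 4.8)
The plan is to start from the contour-integral definition of $G_r(s,x;\varphi)$ given in Proposition \ref{MAFE}, whose integrand contains the factor $(\chi_f^{(m-r)}/\chi_f)(1-s-w)$, the Dirichlet series $L_f^{(r)}(s+w)$, the kernel $K_\varphi(w)/w$, and the Gamma-quotient times $(x/2\pi\, e^{-i\frac{\pi}{2}\operatorname{sgn}t})^w$. The integration is along the vertical line $\mathrm{Re}\,w=\tfrac32-\sigma$, where $L_f^{(r)}(s+w)=\sum_{n}\lambda_f(n)(-\log n)^r n^{-s-w}$ converges absolutely. The overall strategy is to interchange the sum over $n$ with the $w$-integral and, for each $n$, recognise the resulting integral as an instance of $\gamma^{(m-r)}_j$ after inserting the integral representation \eqref{KEP} of $K_\varphi(w)/w$ in terms of $\varphi^{(l+1)}$.

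First I would substitute the Dirichlet series for $L_f^{(r)}(s+w)$ into $G_r(s,x;\varphi)$ and interchange summation and integration, which is justified by absolute convergence on the line $\mathrm{Re}\,w=\tfrac32-\sigma$ together with the decay \eqref{HE2} of the kernel and Gamma-factor already established in the proof of Proposition \ref{MAFE}. Using $x/(2\pi y)=1/|t|$, the $n$-dependent part of the exponential becomes $(n/y)^{w}$ times $(\rho e^{-i\frac{\pi}{2}\operatorname{sgn}t})^{w}$ with $\rho=1/|t|$, matching the shape of $\gamma^{(m-r)}_j(s,1/|t|)$. The key algebraic step is to expand $K_\varphi(w)/w$ via a Taylor-type identity for $K_\varphi$ at the integer points: one repeatedly applies the functional relation between $K_\varphi(w)/w$ and the lower-index kernels $1/(w(w+1)\cdots(w+j))$, producing the finite sum $\sum_{j=0}^{l}\varphi^{(j)}(n/y)(-n/y)^{j}$ weighting each $\gamma^{(m-r)}_j$; this is exactly the device used by Good, and it is here that the terms $\varphi^{(j)}(n/y)(-n/y)^{j}$ arise, the $\varphi^{(j)}$ emerging because successive integration by parts of $K_\varphi$ peels off boundary contributions at each pole $w=-j$.

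The main obstacle, and the step demanding the most care, will be controlling the remainder after the expansion is truncated at $j=l$. For this I would push the contour of the leftover integral past the poles at $w=0,-1,\dots,-l$ and use the integral representation \eqref{KEP}, which writes $K_\varphi(w)/w$ with an explicit factor $\|\varphi^{(l+1)}\|_1$ and the $l+1$-fold denominator $w(w+1)\cdots(w+l)$. Combined with the Gamma-quotient bound \eqref{GEE} on the shifted contour and Lemma \ref{XAE}, which supplies $(\chi_f^{(m-r)}/\chi_f)(1-s-w)\ll(\log|t|)^{m-r}$, the surviving integral is estimated by powers of $|t|$; the factor $|t|^{-l/2}$ comes from the curved part of the relocated contour (where $|w|\asymp|t|^{1/2}$, mirroring $\mathcal F$), the factor $y^{1-\sigma}$ from the size of $(n/y)^{w}$ at $\mathrm{Re}\,w=\tfrac12-\sigma$ summed against $\sum_n |\lambda_f(n)| n^{-\sigma}$ truncated near $n\asymp y$, and $(\log y)^r$ from the weight $(-\log n)^r$. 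The delicate part is verifying uniformity in $n$ and $y$ so that these individual bounds assemble into the stated single error term $O\!\left(y^{1-\sigma}(\log y)^r(\log|t|)^{m-r}|t|^{-l/2}\|\varphi^{(l+1)}\|_1\right)$, and checking that the hypothesis $l\geq(k+1)/2$ is precisely what guarantees the contour shift is legitimate (the horizontal segments vanish) by the decay argument already carried out for the integral \eqref{HE0}.
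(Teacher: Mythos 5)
Your overall strategy---insert the integral representation \eqref{KEP}, extract the main term from the poles at $w=0,-1,\dots,-l$, and estimate the leftover integral on a relocated contour with a curved piece of radius $\sqrt{|t|}$ (whence $|t|^{-l/2}$)---is the same skeleton as the paper's proof. But there is one genuine gap: you never split the Dirichlet series, and without that split the error estimate does not go through. The paper writes $G_r=I_1+I_2$, where $I_1$ carries the \emph{truncated} sum $\sum_{n\le\rho y}$ and $I_2$ the tail $\sum_{n>\rho y}$ (the cut point depending on the variable $\rho$ of the $K_\varphi$-representation), and then moves these two pieces in \emph{opposite} directions: $I_1$ leftward across the poles (legitimately, since for each $\rho$ it is a finite sum), $I_2$ rightward (where the tail only improves). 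Your plan instead pushes a single ``leftover integral'' containing the full series to the left of $\mathrm{Re}\,w=-1/2-\sigma$; there $\mathrm{Re}(s+w)=-1/2$ and $\sum_n\lambda_f(n)(-\log n)^r n^{-s-w}$ diverges, so the claimed bound ``$\sum_n|\lambda_f(n)|n^{-\sigma}$ truncated near $n\asymp y$'' has no source --- the truncation is not automatic, it is precisely what the split at $n=\rho y$ provides. Concretely, the factor $y^{1-\sigma}(\log y)^r$ in the error comes from bounding $\sum_{n\le\rho y}$ by Cauchy's inequality together with Rankin's estimate \eqref{RAN} (and the tail $\sum_{n>\rho y}$ by partial summation), giving $(\rho y)^{1-(\sigma+u)}(\log\rho y)^r$ uniformly on the respective contours; none of this is available for the untruncated series.

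A secondary, smaller imprecision: the main term is not obtained by a ``Taylor-type identity for $K_\varphi$'' followed by a contour shift, but by directly computing the residues of the integrand of $I_1$ (with the $\rho$-integral and the truncated sum inside) at $w=0,\dots,-l$ and then integrating by parts in $\rho$ to convert $\varphi^{(l+1)}$ into $\varphi^{(j)}(n/y)(-n/y)^j$, exactly as in Good's Lemma 4(ii); your description of the mechanism is in the right spirit but would need this to be made precise before the identification with $\gamma_j^{(m-r)}(s,1/|t|)$ (which requires $x/(2\pi y)=1/|t|$) can be checked.
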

\begin{proof}
First using (\ref{KEP}) and dividing the series $L_f^{(r)}(s+w)$ into two path at $\rho y$, we can write 
\begin{align}
G_r(s,x;\varphi)=I_{1}+I_{2}, \label{IC1}
\end{align}
where $I_1$ and $I_2$ are given by
\begin{align*}
I_1=&\frac{1}{2\pi i}\int_{(\frac{3}{2}-\sigma)}\frac{\Gamma(s+w+\frac{k-1}{2})}{\Gamma(s+\frac{k-1}{2})}\left(\frac{x}{2\pi}e^{-i\frac{\pi}{2}{\rm sgn\;}t}\right)^w
\frac{(-1)^l}{w(w+1)\cdots(w+l)}\times\\
&\times\frac{\chi_f^{(m-r)}}{\chi_f}(1-s-w)\left(\int_0^\infty\varphi^{(l+1)}(\rho)\rho^{w+l}\sum_{n\leq \rho y}\frac{\lambda_f(n)(-\log n)^r}{n^{s+w}}d\rho\right)dw,\\
I_2=&\frac{1}{2\pi i}\int_{(\frac{3}{2}-\sigma)}\frac{\Gamma(s+w+\frac{k-1}{2})}{\Gamma(s+\frac{k-1}{2})}\left(\frac{x}{2\pi}e^{-i\frac{\pi}{2}{\rm sgn\;}t}\right)^w
\frac{(-1)^l}{w(w+1)\cdots(w+l)}\times\\
&\times\frac{\chi_f^{(m-r)}}{\chi_f}(1-s-w)\left(\int_0^\infty\varphi^{(l+1)}(\rho)\rho^{w+l}\sum_{n>\rho y}\frac{\lambda_f(n)(-\log n)^r}{n^{s+w}}d\rho\right)dw.
\end{align*}
Let $L_{\pm1}, L_{\pm2}, C_1, C_2$ be paths of integration defined by \begin{align*}
&L_{\pm1}=\{-1/2-\sigma\pm iv \mid v\in(\sqrt{|t|},\infty)\}, \\
&L_{\pm2}=\{3/2-\sigma\pm iv \mid v\in(\sqrt{|t|},\infty)\},\\
&C_{1}=\{-1/2-\sigma+\sqrt{|t|}e^{-i\pi\theta} \mid \theta\in(1/2,3/2)\}, \\  
&C_{2}=\{3/2-\sigma+\sqrt{|t|}e^{i\pi\theta} \mid \theta\in(-1/2,1/2)\}.
\end{align*}
Then by the residue theorem, we have 
\begin{align}
I_{1}=I_{1}'+{\rm Res\;}\mathcal{F}, \quad  I_{2}=I_{2}', \label{IC3}
\end{align}
where $I_1', I_2', {\rm Res\:}\mathcal{F}$ are given by
\begin{align*}
I_1'=&\frac{1}{2\pi i}\int_{L_{-1}+C_{1}+L_{+1}}\frac{\Gamma(s+w+\frac{k-1}{2})}{\Gamma(s+\frac{k-1}{2})}\left(\frac{x}{2\pi}e^{-i\frac{\pi}{2}{\rm sgn\;}t}\right)^w\times\\
&\times\frac{(-1)^l}{w(w+1)\cdots(w+l)}\frac{\chi_f^{(m-r)}}{\chi_f}(1-s-w)\times\\
&\times\left(\int_0^\infty\varphi^{(l+1)}(\rho)\rho^{w+l}\sum_{n\leq \rho y}\frac{\lambda_f(n)(-\log n)^r}{n^{s+w}}d\rho\right)dw,\\
I_2'=&\frac{1}{2\pi i}\int_{L_{-2}+C_{2}+L_{+2}}\frac{\Gamma(s+w+\frac{k-1}{2})}{\Gamma(s+\frac{k-1}{2})}\left(\frac{x}{2\pi}e^{-i\frac{\pi}{2}{\rm sgn\;}t}\right)^w\times\\
&\times\frac{(-1)^l}{w(w+1)\cdots(w+l)}\frac{\chi_f^{(m-r)}}{\chi_f}(1-s-w)\times\\
&\times\left(\int_0^\infty\varphi^{(l+1)}(\rho)\rho^{w+l}\sum_{n>\rho y}\frac{\lambda_f(n)(-\log n)^r}{n^{s+w}}d\rho\right)dw,\\
{\rm Res\:}\mathcal{F}=&\sum_{w=0,-1,\dots,-l}\frac{\Gamma(s+w+\frac{k-1}{2})}{\Gamma(s+\frac{k-1}{2})}\left(\frac{x}{2\pi}e^{-i\frac{\pi}{2}{\rm sgn\;}t}\right)^w
\frac{(-1)^l}{w(w+1)\cdots(w+l)}\times\\
&\times\frac{\chi_f^{(m-r)}}{\chi_f}(1-s-w)\left(\int_0^\infty\varphi^{(l+1)}(\rho)\rho^{w+l}\sum_{n\leq \rho y}\frac{\lambda_f(n)(-\log n)^r}{n^{s+w}}d\rho\right)
\end{align*}
By the same way to \cite[p.337, Lemma 4 (ii)]{GD1},  ${\rm Res\;}\mathcal{F}$ is written by 
\begin{align}
{\rm Res\;}\mathcal{F}=\sum_{n\leq 2y}\frac{\lambda_f(n)(-\log n)^r}{n^s}\sum_{j=0}^l\varphi^{(j)}\left(\frac{n}{y}\right)\left(-\frac{n}{y}\right)^j\gamma_{j}^{(m-r)}\left(s,\frac{1}{|t|}\right) \label{IC5}
\end{align}
under the condition $x/(2\pi y)=1/|t|$.

Next to estimate $I_{1}'$ and $I_{2}'$, we consider these integral. 
Clearly (\ref{GEE}) gives
\begin{align}
&\frac{\Gamma(s+w+\frac{k-1}{2})}{\Gamma(s+\frac{k-1}{2})}\left(\frac{x}{2\pi}e^{-i\frac{\pi}{2}{\rm sgn\;}t}\right)^w\notag\\
&\ll\begin{cases} |t|^{\frac{1}{2}-\sigma-\frac{k-1}{2}}(1+|t+v|)^{\sigma+u-\frac{1}{2}+\frac{k-1}{2}}(x/2\pi)^u, & w\in L_{\pm 1, \pm 2}, \\ |t|^u(x/2\pi)^u, & w\in\mathcal{F} \end{cases} \label{IE2}
\end{align}
as $|t|\to\infty$. 
Using Cauchy's inequality and (\ref{RAN}), we have
\begin{align*}
\sum_{n\leq\rho y}\frac{\lambda_f(n)(-\log n)^r}{n^{s+w}}
&\ll\sqrt{\sum_{n\leq\rho y}|\lambda_f(n)|^2}\sqrt{\sum_{n\leq\rho y}\frac{(\log n)^{2r}}{n^{2(\sigma+u)}}}\\
&\ll(\rho y)^{1-(\sigma+u)}(\log\rho y)^r, \quad w\in L_{\pm1}\cup C_1,\\
\sum_{n>\rho y}\frac{\lambda_f(n)(-\log n)^r}{n^{s+w}}
&\ll\int_{\rho y}^\infty\left(\frac{(\log\mu)^r}{\mu^{\sigma+u}}\right)'\sum_{n\leq\mu}|\lambda_f(n)|d\mu\\
&\ll(\rho y)^{1-(\sigma+u)}(\log\rho y)^r, \quad w\in L_{\pm2}\cup C_2.
\end{align*}
Hence we obtain
\begin{align}
\int_0^\infty\varphi^{(l+1)}(\rho)\rho^{w+l}\sum_{n\leq \rho y}\frac{\lambda_f(n)(-\log n)^r}{n^{s+w}}d\rho\ll y^{1-(\sigma+u)}(\log y)^r\|\varphi^{(l+1)}\|_1,&\notag\\ 
  w\in L_{\pm1}\cup C_{1},&\label{IE4}\\
\int_0^\infty\varphi^{(l+1)}(\rho)\rho^{w+l}\sum_{n>\rho y}\frac{\lambda_f(n)(-\log n)^r}{n^{s+w}}d\rho\ll y^{1-(\sigma+u)}(\log y)^r\|\varphi^{(l+1)}\|_1,&\notag\\
  w\in L_{\pm2}\cup C_{2}.&\label{IE5}
\end{align}

Therefore Lemma \ref{XAE} gives
\begin{align}
\frac{(-1)^l}{w\cdots(w+l)}\frac{\chi_f^{(m-r)}}{\chi_f}(1-s-w)\ll\begin{cases} |v|^{-(l+1)}(\log|v|)^{m-r}, & w\in L_{\pm1,\pm2},\\ |t|^{-\frac{l+1}{2}}(\log|t|)^{m-r}, & w\in\mathcal{F}.  \end{cases} \label{IE1}
\end{align}
\begin{rem}\label{MEM1}
Note that 
\begin{align*}
&\gamma_j^{(r)}(s,1/|t|)\\
&=
\begin{cases} \displaystyle  O\left(\frac{(\log|t|)^r}{|t|^{j/2}}\right), & j\in\mathbb{Z}_{\geq0},\\
\displaystyle\frac{\chi_f^{(r)}}{\chi_f}(1-s)=\left(-2\log\frac{|t|}{2\pi}\right)^r+O\left(\frac{(\log|t|)^{r-1}}{|t|}\right), & j=0, \\ \displaystyle  \frac{\chi_f^{(r)}}{\chi_f}(1-s)-\frac{\chi_f^{(r)}}{\chi_f}(-s)\frac{it}{s+\frac{k-1}{2}}=O\left(\frac{(\log|t|)^r}{|t|}\right), & j=1, \end{cases}
\end{align*}
by using (\ref{IE1}), the residue theorem and Lemma \ref{XAE}.
\end{rem}

Finally combining (\ref{IE2})--(\ref{IE1}) and using the same way to \cite[p.343--344]{GD1}, we find that $I_1', I_2'$ are estimated as
\begin{align}
I_{1}'\ll& y^{1-\sigma}(\log y)^r\|\varphi^{(l+1)}\|_1\times\notag\\
&\times\int_{L_{\pm1}}|t|^{\frac{1}{2}-(\sigma+u)-\frac{k-1}{2}}(1+|t+v|)^{\sigma+u-\frac{1}{2}+\frac{k-1}{2}}\frac{(\log|v|)^{m-r}}{|v|^{l+1}}dv+\notag\\
&+y^{1-\sigma}(\log y)^r(\log|t|)^{m-r}\|\varphi^{(l+1)}\|_1|t|^{-\frac{l+1}{2}}\int_{C_1}|t|^u\left(\frac{x}{2\pi y}\right)^u|dw|\notag\\
\ll& y^{1-\sigma}(\log y)^r(\log|t|)^{m-r}|t|^{-\frac{l}{2}}\|\varphi^{(l+1)}\|_1, \label{IC7}\\
I_{2}'\ll& y^{1-\sigma}(\log y)^r(\log|t|)^{m-r}|t|^{-\frac{l}{2}}\|\varphi^{(l+1)}\|_1,  \label{IC8}
\end{align}
under the condition $x/(2\pi y)=1/|t|$. From (\ref{IC1})--(\ref{IC5}), (\ref{IC7}) and (\ref{IC8}), the proof of Proposition \ref{PAFE} is completed.
\end{proof}
We use (\ref{STR}) and combine the result Propositions \ref{MAFE} and \ref{PAFE}. Let $y_1$, $y_2$ be the positive numbers satisfying $x/(2\pi y_2)=1/|t|$, $(1/x)/(2\pi y_2)=1/|t|$ respectively. Using Remark \ref{MEM1}, the main term of (\ref{AA1}) is obtained. Then under the condition $(2\pi)^2y_1y_2=|t|^2$, the proof of Theorem \ref{ATH1} is completed.

\section{Proof of Theorem \ref{ATH2}}\label{AMP2}
To get the approximate functional equation for $L_f^{(m)}(s)$ without characteristic functions, we introduce new functions $\xi$, $\psi_{\alpha}$ and $\psi_{0\alpha}$. Let $\xi$ be the function defined by $\xi(\rho)=1$ when $\rho\in[0,1]$ and $\xi(\rho)=0$ when $\rho\in[1,\infty)$. For $\alpha\in\mathbb{R}_{\geq0}$ and $\varphi\in\mathcal{R}$, let $\psi_\alpha$ be the function  defined by 
\begin{align*}
\psi_{\alpha}(\rho)&=\begin{cases} 1,  & \rho\in[0,1-1/(2|t|^{\alpha})],\\ \varphi(1+(\rho-1)|t|^\alpha), & \rho\in[1-1/(2|t|^{\alpha}),1+1/|t|^\alpha],\\ 0, & \rho\in[1+1/|t|^\alpha,\infty), \end{cases}
\end{align*}
and $\psi_{0\alpha}$ is defined by $\psi_{0\alpha}(\rho)=1-\psi_{\alpha}(1/\rho)$. 
\begin{rem}\label{MEM2} From \cite[(12)--(15)]{GD1}, we see that $\psi_{\alpha}, \psi_{0\alpha}\in\mathcal{R}$, $\xi\not\in\mathcal{R}$, 
\begin{align*}
(\psi_\alpha-\xi)(\rho)=0, \quad (\psi_{0\alpha}-\xi)(\rho)=0, \quad \psi^{(j)}_{\alpha}(\rho)=0, \quad \psi^{(j)}_{0\alpha}(\rho)=0.
\end{align*}
for $j\in\mathbb{Z}_{\geq 1}$ and $\rho\in [0,1-1/(2|t|^\alpha)]\cup[1+1/|t|^{\alpha},\infty)$, and
\begin{align*}
\psi_\alpha^{(j)}(\rho)\ll|t|^{\alpha j}, \ \ \psi_{0\alpha}^{(j)}(\rho)\ll|t|^{\alpha j}, \ \ \|\psi_\alpha^{(j)}\|_1\ll|t|^{\alpha(j-1)}, \ \ \|\psi_{0\alpha}^{(j)}\|_1\ll|t|^{\alpha(j-1)}
\end{align*}
for $j\in\mathbb{Z}_{\geq 0}$ and $\rho\in[0,\infty)$.
\end{rem}
Let $M_\varphi(s)$ be the first sum on the right-hand side of (\ref{AA1}).
Setting $y_1=y_2=|t|/(2\pi)$ and replacing $\varphi\mapsto\psi_\alpha$ in Theorem \ref{ATH1}, we can write 
\begin{align}
L^{(m)}_f(s)=M_\xi(s)+O(M_{\psi_\alpha-\xi}(s)+R_{\psi_\alpha}(s)). \label{EE1}
\end{align}
Then we have
\begin{align}
M_\xi(s)=&\sum_{n\leq\frac{|t|}{2\pi}}\frac{\lambda_f(n)(-\log n)^m}{n^{s}}+\notag\\
&+\sum_{r=0}^m(-1)^{m-r}\binom{m}{r}\chi_f^{(m-r)}(s)\sum_{n\leq\frac{|t|}{2\pi}}\frac{\lambda_f(n)(-\log n)^{r}}{n^{1-s}} \label{EE2}
\end{align}
and 
\begin{align}
&\hspace{-1em}
M_{\psi_\alpha-\xi}(s)+R_{\psi_\alpha}(s)\notag\\
\ll&\sum_{\frac{|t|}{2\pi}\frac{1}{1+\frac{1}{|t|^{\alpha}}}\leq n\leq\frac{|t|}{2\pi}(1+\frac{1}{|t|^{\alpha}})}\frac{|\lambda_f(n)|(\log n)^m}{n^\sigma}|S_{\psi_\alpha}^{(0)}(s)|+\notag\\
&+\sum_{r=0}^m\sum_{\frac{|t|}{2\pi}\frac{1}{1+\frac{1}{|t|^{\alpha}}}\leq n\leq\frac{|t|}{2\pi}(1+\frac{1}{|t|^{\alpha}})}\frac{|\lambda_f(n)|(\log n)^r}{n^\sigma}|S_{\psi_{0\alpha}}^{(m-r)}(1-s)|+\notag\\
&+|t|^{1-\sigma+(\alpha-\frac{1}{2})l}(\log|t|)^m. \label{MX1}
\end{align}
where $S_{\psi_\alpha}^{(r)}(s)$ is given by
\begin{align*}
S_{\psi_\alpha}^{(r)}(s)=&(\psi_\alpha-\xi)\left(\frac{2\pi n}{|t|}\right)\frac{\chi_f^{(r)}}{\chi_f}(1-s)+\\
&+\sum_{j=1}^l\psi_\alpha^{(j)}\left(\frac{2\pi n}{|t|}\right)\left(-\frac{2\pi n}{|t|}\right)^j\gamma_{j}^{(r)}\left(s,\frac{1}{|t|}\right),
\end{align*}
and we used Remarks \ref{MEM1}, \ref{MEM2}, (\ref{STR}) and the fact $1-1/(2|t|^\alpha)\geq 1/(1+1/|t|^\alpha)$ for $\alpha\in\mathbb{R}_{\geq0}$.  
Using Remarks \ref{MEM1} and \ref{MEM2}, in the case of $n\in[|t|/(2\pi(1+|t|^{-\alpha})),(1+|t|^{-\alpha})|t|/(2\pi)]$ the sum $S_{\psi_\alpha}^{(r)}(s)$ is estimated as follows under the condition $\alpha\leq 1/2$:
\begin{align}
S_{\psi_\alpha}^{(r)}(s)\ll(\log|t|)^r+\sum_{j=1}^l|t|^{(\alpha-\frac{1}{2})j}(\log|t|)^r\ll (\log|t|)^r\ll|t|^\varepsilon. \label{MX2}
\end{align}
Deligne's estimate $|\lambda_f(n)|\leq d(n)\ll n^\varepsilon$ (see \cite{DEL}) gives
\begin{align}
&\sum_{\frac{|t|}{2\pi}\frac{1}{1+\frac{1}{|t|^{\alpha}}}\leq n\leq\frac{|t|}{2\pi}(1+\frac{1}{|t|^{\alpha}})}\frac{|\lambda_f(n)|(\log n)^r}{n^\sigma}\ll|t|^{1-\sigma-\alpha+\varepsilon}.\label{MX5}
\end{align}
Therefore combining (\ref{MX1})--(\ref{MX5}), we obtain the following estimate:
\begin{align}
M_{\psi_\alpha-\xi}(s)+R_{\psi_\alpha}(s)=O(|t|^{1-\sigma-\alpha+\varepsilon})+O(|t|^{1-\sigma+(\alpha-\frac{1}{2})l+\varepsilon})=O(|t|^{\frac{1}{2}-\sigma+\varepsilon}), \label{EE3}
\end{align}
where we put $\alpha=1/2-\varepsilon$ and take $l\geq1/(2\varepsilon)$. 
Combining (\ref{EE1}), (\ref{EE2}) and (\ref{EE3}), we obtain the assertion of Theorem \ref{ATH2}.

\section{Proof of Theorem \ref{ATH3}}\label{AMP3}
Putting $y_1=y_2=|t|/(2\pi)$ in Theorem \ref{ATH1} and writing $\varphi=\varphi_1+\varphi_2$, $\varphi_0=\varphi_{01}+\varphi_{02}$ where $\varphi_1, \varphi_2, \varphi_{01}, \varphi_{02}$ are defined by (\ref{CFD}), we obtain the following formula: 
\begin{align}
\int_0^T|L_f^{(m)}(s)|^2dt=&\int_1^T\left|\sum_{r=1}^5S_r(s)\right|^2dt+O(1) 
=\sum_{1\leq\mu,\nu\leq 5}I_{\mu,\nu}+O(1), \label{SP1}
\end{align}
where $S_r(s)$ are given by
\begin{align*}
&S_1(s)=\sum_{n=1}^\infty\frac{\lambda_f(n)(-\log n)^m}{n^s}\varphi_1\left(\frac{2\pi n}{t}\right), \quad\\
&S_2(s)=\sum_{n=1}^\infty\frac{\lambda_f(n)(-\log n)^m}{n^{1-s}}\varphi_2\left(\frac{2\pi n}{t}\right),\\
&S_3(s)=\sum_{r=0}^m(-1)^{r}\binom{m}{r}\chi_f^{(m-r)}(s)\sum_{n=1}^\infty\frac{\lambda_f(n)(-\log n)^{r}}{n^{1-s}}\varphi_{01}\left(\frac{2\pi n}{t}\right),\\
&S_4(s)=\sum_{r=0}^m(-1)^{r}\binom{m}{r}\chi_f^{(m-r)}(s)\sum_{n=1}^\infty\frac{\lambda_f(n)(-\log n)^{r}}{n^{1-s}}\varphi_{02}\left(\frac{2\pi n}{t}\right), \quad\\
&S_5(s)=R_\varphi(s),  
\end{align*}
and $I_{\mu,\nu}$ ($\mu,\nu\in\{1,\dots,5\}$) are given by
\begin{align*}
I_{\mu,\nu}=\int_1^TS_\mu(s)\overline{S_\nu(s)}dt.
\end{align*}

First we consider the integral $I_{\mu,\nu}$ in the case of $\mu=\nu$. 
In the case of $(\mu,\nu)=(1,1)$, applying  (\ref{XXE}), (\ref{XYN}) of Lemma \ref{GIE}, we get  
\begin{align}
I_{1,1}
=&\sum_{n_1,n_2=1}^\infty\frac{\overline{\lambda_f(n_1)}\lambda_f(n_2)(\log n_1\log n_2)^m}{(n_1n_2)^\sigma}\times\notag\\
&\times\int_1^T\overline{\varphi_1\left(\frac{2\pi n_1}{t}\right)}\varphi_1\left(\frac{2\pi n_2}{t}\right)\left(\frac{n_1}{n_2}\right)^{it}dt\notag\\
=&T\sum_{n\leq\frac{\delta_1}{2\pi}T}\frac{|\lambda_f(n)|^2(\log n)^{2m}}{n^{2\sigma}}+O\left(\sum_{n\leq\frac{\delta_1}{2\pi}T}\frac{|\lambda_f(n)|^2(\log n)^{2m}}{n^{2\sigma-1}}\right)+\notag\\
&+\frac{1}{i}\sum_{\begin{subarray}{c} n_1,n_2<\frac{\delta_1}{2\pi}T,\\ n_1\ne n_2\end{subarray}}\frac{\overline{\lambda_f(n_1)\varphi_1({2\pi n_1}/{T})n_1^{-iT}}\lambda_f(n_2)\varphi_1({2\pi n_2}/{T})n_2^{-iT}}{(n_1n_2)^\sigma}\times\notag\\
&\times\frac{\log(n_1/n_2)}{(\log n_1\log n_2)^m} +O\left(\sum_{n_1<n_2\leq\frac{\delta_1}{2\pi}T}\frac{|\lambda_f(n_1)\lambda_f(n_2)|(\log{n_1}\log{n_2})^m}{(n_1n_2)^\sigma n_2(\log(n_1/n_2))^2}\right)\notag\\
=&:U_1+O(U_2)+U_3+O(U_4). \label{I11}
\end{align}
Here we shall calculate the right-hand side of (\ref{I11}). Using partial summation and (\ref{RAN}), we obtain the approximate formula for $U_1$  as
\begin{align}
U_1&=\begin{cases}\dfrac{C_f}{2m+1}T(\log T)^{2m+1}+O(T), & \sigma=1/2, \\\displaystyle T\sum_{n=1}^\infty\frac{|\lambda_f(n)|^2(\log n)^{2m}}{n^{2\sigma}}+O(T^{2(1-\sigma)}(\log T)^{2m}), & \sigma\in(1/2,1]. \end{cases}\label{I11A}
\end{align}
The result (\ref{RAN}), the estimates (d), (c) of Lemma \ref{G6A} imply that
\begin{align}
U_j&=\begin{cases} O(T^{2(1-\sigma)}(\log T)^{2m}), & \sigma\in[1/2,1), \\ O((\log T)^{2m+2}), & \sigma=1 \end{cases}\label{I11B}
\end{align}
for $j=2,3,4$ respectively. 
From (\ref{I11})--(\ref{I11B}), the error term and the main term of $I_{1,1}$ correspond to those of the right-hand side of (\ref{AA3}) when $\sigma\in(1/2,1]$. However, the main term of the right-hand side of (\ref{AA3}) is not obtained completely when $\sigma=1/2$. 
In the case of $(\mu,\nu)=(2,2)$, applying  (\ref{XYE}), (\ref{XYN}) of Lemma \ref{GIE} and (a), (c), (d) of Lemma \ref{G6A}, we obtain 
\begin{align}
I_{2,2}
=&\frac{1}{i}\sum_{\begin{subarray}{c} n_1,n_2<\frac{T}{\pi},\\ n_1\ne n_2\end{subarray}}\frac{\overline{\lambda_f(n_1)\varphi_2({2\pi n_1}/{T})n_1^{-iT}}\lambda_f(n_2)\varphi_2({2\pi n_2}/{T})n_2^{-iT}}{(n_1n_2)^\sigma}\times\notag\\
&\times\frac{(\log n_1\log n_2)^m}{\log(n_1/n_2)}+O\left(\sum_{n_1<n_2\leq\frac{T}{\pi}}\frac{|\lambda_f(n_1)\lambda_f(n_2)|(\log{n_1}\log{n_2})^m}{(n_1n_2)^\sigma n_2(\log(n_1/n_2))^2}\right)+\notag\\
&+O\left(\sum_{n\leq\frac{T}{\pi}}\frac{|\lambda_f(n)|^2(\log n)^{2m}}{n^{2\sigma-1}}\right)\notag\\
=&\begin{cases} O(T^{2(1-\sigma)}(\log T)^{2m}), & \sigma\in[1/2,1),\\ O((\log T)^{2m+1}), & \sigma=1.\end{cases}\label{I22} 
\end{align}

Next we consider the case $(\mu,\nu)=(3,3)$. Using (\ref{IEJ}) and the condition $r_1+r_2=r$, we obtain the following formula:
\begin{align*}
(\overline{\chi_f^{(m-r_1)}}\chi_f^{(m-r_2)})(s)
=&(2\pi)^{4\sigma-2}(-2)^{2m-r}\frac{\left(\log\frac{t}{2\pi}\right)^{2m-r}}{t^{4\sigma-2}}+M(t).
\end{align*}
where $M(t)$ is given by $M(t)=O((\log t)^{2m-r}/t^{4\sigma-1})$. Then $I_{3,3}$ is written as 
\begin{align}
I_{3,3}=&\sum_{r=0}^{2m}\sum_{r_1+r_2=r}(-1)^r\binom{m}{r_1}\binom{m}{r_2}\sum_{n_1,n_2=1}^\infty\frac{\overline{\lambda_f(n_1)}\lambda_f(n_2)(\log n_1)^{r_1}(\log n_2)^{r_2}}{(n_1n_2)^{1-\sigma}}\times\notag\\
&\times\int_1^T\overline{\varphi_{01}\left(\frac{2\pi n_1}{t}\right)}\varphi_{01}\left(\frac{2\pi n_2}{t}\right)\left(\frac{n_1}{n_2}\right)^{it}(\overline{\chi_f^{(m-r_1)}}\chi_f^{(m-r_2)})(s)dt\notag\\
=&I_{3,3}^{+}+I_{3,3}^{-}, \label{I33}
\end{align}
where $I_{3,3}^{+}$, $I_{3,3}^{-}$ are defined by
\begin{align*}
I_{3,3}^{+}:=&(2\pi)^{4\sigma-2}\sum_{r=0}^{2m}(-2)^{2m-r}\sum_{r_1+r_2=r}\binom{m}{r_1}\binom{m}{r_2}\times\\
&\times\sum_{n_1, n_2=1}^\infty\frac{\overline{\lambda_f(n_1)}\lambda_f(n_2)(\log n_1)^{r_1}(\log n_2)^{r_2}}{(n_1n_2)^{1-\sigma}}\times\\
&\times\int_1^T\overline{\varphi_{01}\left(\frac{2\pi n_1}{t}\right)}\varphi_{01}\left(\frac{2\pi n_2}{t}\right)\left(\frac{n_1}{n_2}\right)^{it}\frac{\left(\log\frac{t}{2\pi}\right)^{2m-r}}{t^{4\sigma-2}}dt,\\
I_{3,3}^{-}:=&\sum_{r=0}^{2m}\sum_{r_1+r_2=r}\binom{m}{r_1}\binom{m}{r_2}\sum_{\begin{subarray}{c} n_1, n_2=1\end{subarray}}^\infty\frac{\overline{\lambda_f(n_1)}\lambda_f(n_2)}{(n_1n_2)^{1-\sigma}}\times\\
&\times(\log n_1)^{r_1}(\log n_2)^{r_2}\int_1^T\overline{\varphi_{01}\left(\frac{2\pi n_1}{t}\right)}\varphi_{01}\left(\frac{2\pi n_2}{t}\right)\left(\frac{n_1}{n_2}\right)^{it}M(t)dt.
\end{align*}
respectively. Here we shall approximate $I_{3,3}^{+}$ and $I_{3,3}^{-}$. In order to estimate $I_{3,3}^{-}$, we use the fact that  $(n_1n_2)^{1-\sigma} n_2^{4\sigma-2}=(n_1n_2)^\sigma(n_2/n_1)^{2\sigma-1}$ $\gg$ $(n_1n_2)^\sigma$ for $\sigma\in\mathbb{R}_{\geq1/2}$ and $n_1\leq n_2$.  
Then using (d) of Lemma \ref{GIE} and (a), (b) of Lemma \ref{G6A}, we see that 
\begin{align}
I_{3,3}^{-}\ll&\sum_{r=0}^{2m}\sum_{r_1+r_2=r}\sum_{n_1\leq n_2\leq\frac{\delta_1}{2\pi}T}\frac{|\lambda_f(n_1)\lambda_f(n_2)|(\log n_1)^{r_1}(\log n_2)^{r_2}}{(n_1n_2)^{1-\sigma}}\times\notag\\
&\times\begin{cases} |\log(T/n_2)|(\log T)^{2m-r}, & \sigma=1/2, \\ (\log n_2)^{2m-r}/n_2^{4\sigma-2}, & \hspace{-1.25em}\sigma\in(1/2,1]  \end{cases}\notag\\
\ll&\begin{cases} T^{2(1-\sigma)}(\log T)^{2m}, & \sigma\in[1/2,1), \\ (\log T)^{2m+2}, & \sigma=1. \end{cases}\label{I33A}
\end{align}
The formula (\ref{XXE}), (\ref{XYN}) of Lemma \ref{GIE} imply that 
\begin{align}
I_{3,3}^{+}
=&\begin{cases} \displaystyle
 \frac{(2\pi)^{4\sigma-2}}{3-4\sigma}T^{3-4\sigma}\sum_{r=0}^{2m}\left(2\log\tfrac{T}{2\pi}\right)^{2m-r}\times & \\\displaystyle\times\sum_{r_1+r_2=r}{\binom{m}{r_1}\binom{m}{r_2}}\sum_{n\leq\frac{\delta_1}{2\pi}T}\frac{|\lambda_f(n)|^2(\log n)^{r}}{n^{2(1-\sigma)}}, & \sigma\in[1/2,4/3), \\ 0, & \sigma\in[3/4,1],\end{cases}+\notag\\
&+O\left(\sum_{r=0}^{2m}\sum_{n\leq\frac{\delta_1}{2\pi}T}\frac{|\lambda_f(n)|^2(\log n)^{r}}{n^{2(1-\sigma)}}
\times\left\{\begin{array}{r} 
T^{3-4\sigma}(\log T)^{2m-r}, \\ \sigma\in[1/2,3/4),\\ |\log(T/n)|(\log T)^{2m-r}, \\ \sigma=3/4, \\ (\log T)^{2m-r}/n^{4\sigma-3}, \\ \sigma\in(3/4,1]. \end{array}\right.\right)+\notag\\
&+O\left(\sum_{r=0}^{2m}\sum_{\frac{\delta}{2\pi}T<n\leq \frac{\delta_1}{2\pi}T}\frac{|\lambda_f(n)|^2(\log n)^r}{n^{2(1-\sigma)}}\frac{(\log T)^{2m-r}}{n^{4\sigma-3}}\right)+\notag\\
&+\frac{(2\pi)^{4-2\sigma}}{i}\sum_{r=0}^{2m}\left(2\log\tfrac{T}{2\pi}\right)^{2m-r}\sum_{r_1+r_2=r}\binom{m}{r_1}\binom{m}{r_2}\times\notag\\
&\times\sum_{\begin{subarray}{c} n_1, n_2\leq\frac{\delta_1}{2\pi}T, \\ n_1\ne n_2 \end{subarray}}\frac{(\log n_1)^{r_1}(\log n_2)^{r_2}}{(n_1n_2)^\sigma\log(n_1/n_2)}\times\notag\\
&\times\overline{\frac{\lambda_f(n_1)\varphi_{01}({2\pi n_1}/{T})({n_1}/{T})^{2\sigma-1}}{\overline{n_1^{iT}}}}\frac{\lambda_f(n_2)\varphi_{01}({2\pi n_2}/{T})({n_2}/{T})^{2\sigma-1}}{n_2^{iT}}+\notag\\
&+O\left(\sum_{r=0}^{2m}\sum_{r_1+r_2=r}\sum_{n_1<n_2\leq\frac{\delta_1T}{2\pi}}\frac{|\lambda_f(n_1)\lambda_f(n_2)|(\log n_1)^{r_1}(\log n_2)^{r_2}}{(n_1n_2)^{1-\sigma}}\times\right.\notag\\
&\left.\times\frac{(\log n_2)^{2m-r}}{n_2^{4\sigma-1}(\log(n_1/n_2))^2}\right)\notag\\
=&:V_{1}+O(V_{2})+O(V_{3})+V_4+O(V_5), \label{I33B}
\end{align}
A similar discussion to $U_3$ gives that $V_1$ is approximated as
\begin{align}
V_1=\begin{cases} (A_{f,m}-C_f/(2m+1))T(\log T)^{2m+1}+O(T(\log T)^{2m}), & \sigma=1/2, \\ O(T^{2(1-\sigma)}(\log T)^{2m}), & \sigma\in(1/2,1]. \end{cases} \label{I33C}
\end{align}
To estimate $V_4$ and $V_5$, we use the fact that $(n_1n_2)^{1-\sigma}n_2^{4\sigma-1}$ $=$ $(n_1n_2)^\sigma$ $n_2(n_2/n_1)^{2\sigma-1}$ $\gg$ $(n_1n_2)^\sigma n_2$ for $\sigma\in\mathbb{R}_{\geq1/2}$ and $n_1\leq n_2$. Then the estimates (d), (c) of Lemma \ref{G6A} give that  
\begin{align}
V_j=\begin{cases} O(T^{2(1-\sigma)}(\log T)^{2m}), & \sigma\in[1/2,1), \\ O((\log T)^{2m+2}), & \sigma=1 \end{cases} \label{I33D}
\end{align}
for $j=4, 5$ respectively. By the fact that $n^{2(1-\sigma)}\gg  n^{2(1-\sigma)}n^{4\sigma-3}=n^{2\sigma-1}$ for $\sigma\in\mathbb{R}_{\leq 3/4}$, the estimate (b) of Lemma \ref{G6A} when $\sigma=3/4$ and the formula (\ref{RAN}), the sum $V_2$ and $V_3$ are estimated as 
\begin{align}
V_j=\begin{cases} O(T^{2(1-\sigma)}(\log T)^{2m}), & \sigma\in[1/2,1), \\ O((\log T)^{2m+1}), & \sigma=1 \end{cases} \label{I33E}
\end{align}
for $j=2,3$.
Therefore, from (\ref{I33})--(\ref{I33E}) the approximate formula for $I_{3,3}$ is obtained. In the case of $(\mu,\nu)=(4,4)$, by a similar discussion to the case of $(\mu,\nu)=(3,3)$ the integral $I_{4,4}$ is approximated as
\begin{align}
I_{4,4}
=&O\left(\sum_{r=0}^{2m}\sum_{r_1+r_2=r}\sum_{n\leq\frac{T}{\pi}}\frac{|\lambda_f(n)|^2(\log n)^{2m}}{n^{2\sigma-1}}\right)+\notag\\
&+\frac{(2\pi)^{4-2\sigma}}{i}\sum_{r=0}^{2m}\left(2\log\tfrac{T}{2\pi}\right)^{2m-r}\sum_{r_1+r_2=r}\binom{m}{r_1}\binom{m}{r_2}\times\notag\\
&\times\sum_{\begin{subarray}{c} n_1, n_2\leq\frac{T}{\pi}, \\ n_1\ne n_2 \end{subarray}}\frac{(\log n_1)^{r_1}(\log n_2)^{r_2}}{(n_1n_2)^\sigma\log(n_1/n_2)}\times\notag\\
&\times\overline{\frac{\lambda_f(n_1)\varphi_{02}({2\pi n_1}/{T})({n_1}/{T})^{2\sigma-1}}{\overline{n_1^{iT}}}}\frac{\lambda_f(n_2)\varphi_{02}({2\pi n_2}/{T})({n_2}/{T})^{2\sigma-1}}{n_2^{iT}}+\notag\\
&+O\left(\sum_{r=0}^{2m}\sum_{r_1+r_2=r}\sum_{n_1<n_2\leq\frac{T}{\pi}}\frac{|\lambda_f(n_1)\lambda_f(n_2)|(\log n_1)^{r_1}(\log n_2)^{r_2}}{(n_1n_2)^{1-\sigma}}\right.\times\notag\\
&\left.\times\frac{(\log n_2)^{2m-r}}{n_2^{4\sigma-1}(\log(n_1/n_2))^2}\right)+\notag\\
&+O\left(\sum_{r=0}^{2m}\sum_{r_1+r_2=r}\sum_{n_1\leq n_2\leq\frac{T}{\pi}}\frac{|\lambda_f(n_1)\lambda_f(n_2)|^2(\log{n_1})^{r_1}(\log n_2)^{r_2}}{(n_1n_2)^{1-\sigma}}\times\right.\notag\\
&\left.\times\begin{cases} |\log(T/n)|(\log T)^{2m-r}, & \sigma=1/2, \\ (\log n_2)^{2m-r}/n_2^{4\sigma-2}, &  \sigma\in(1/2,1] \end{cases}\right)\notag\\
=& \begin{cases} O(T^{2(1-\sigma)}(\log T)^{2m}), & \sigma\in[1/2,1), \\ O((\log T)^{2m+2}), & \sigma=1, \end{cases}\label{I44X}
\end{align}
where (\ref{XYE})--(\ref{XYM}) of Lemma \ref{GIE}, the formla (\ref{RAN}) and (b)--(d) of Lemma \ref{G6B} were used.
Finally we consider the case $(\mu,\nu)=(5,5)$.   Remarks \ref{MEM1}, \ref{MEM2} and the formula (\ref{STR}) imply that 
\begin{align}
R_\varphi(s)\ll&\sum_{\frac{t}{4\pi}\leq n\leq\frac{t}{\pi}}\frac{|\lambda_f(n)|(\log n)^m}{n^\sigma}\left(\frac{1}{|t|}+\sum_{j=2}^l\frac{1}{|t|^{\frac{j}{2}}}\right)+|\chi_f(s)|\sum_{r=0}^m\sum_{\frac{t}{4\pi}\leq n\leq\frac{t}{\pi}}1\times\notag\\
&\times\frac{|\lambda_f(n)|(\log n)^r}{n^{1-\sigma}}\left(\frac{(\log|t|)^{m-r}}{|t|}+\sum_{j=2}^l\frac{(\log|t|)^{m-r}}{|t|^{\frac{j}{2}}}\right)+\frac{(\log|t|)^m}{|t|^{\sigma-1+\frac{l}{2}}}\notag\\
\ll&\frac{(\log t)^m}{t^\sigma}.  \label{RRE}
\end{align}
Hence we get
\begin{align}
I_{5,5}\ll \int_1^T\frac{(\log t)^{2m}}{t^{2\sigma}}dt \ll\begin{cases} (\log T)^{2m+1}, & \sigma=1/2, \\ 1, & \sigma\in(1/2,1]. \end{cases} \label{I55}
\end{align}

Lastly we consider $I_{\mu,\nu}$ in the case of $\mu\ne\nu$. Since $I_{1,1}$ contains the main term of the mean value formula for $L_f^{(m)}(s)$, and Cauchy's inequality implies that $|I_{\mu,\nu}|\leq I_{\mu,\mu}I_{\nu,\nu}$ for $\mu,\nu\in\{1,\dots,5\}$, it follows that it is enough to consider $I_{\mu,\nu}$ in the case of $(\mu,\nu)=(1,2)$, $(1,3)$, $(1,4)$, $(1,5)$. 
First in the case of $(\mu,\nu)=(1,2)$, using  (\ref{XYE}), (\ref{XYN}) of Lemma \ref{GIE}, (c), (d) of Lemma \ref{G6A} and the estimate (\ref{I11A}), we  obtain 
\begin{align}
I_{1,2}
=&\sum_{n_1, n_2=1}^\infty\frac{\overline{\lambda_f(n_1)}\lambda_f(n_2)(\log n_1)^m(\log n_2)^m}{(n_1n_2)^\sigma}\times\notag\\
&\times\int_1^T\overline{\varphi_1\left(\frac{2\pi n_1}{t}\right)}\varphi_2\left(\frac{2\pi n_2}{t}\right)\left(\frac{n_1}{n_2}\right)^{it}dt\notag\\
=&\frac{1}{i}\sum_{\begin{subarray}{c} n_1,n_2<\frac{T}{\pi},\\ n_1\ne n_2\end{subarray}}\frac{\overline{\lambda_f(n_1)\varphi_{1}({2\pi n_1}/{T})n_1^{-iT}}\lambda_f(n_2)\varphi_2({2\pi n_2}/{T})n_2^{-iT}}{(n_1n_2)^\sigma}\times\notag\\
&\times\frac{(\log n_1\log n_2)^m}{\log(n_1/n_2)}
+O\left(\sum_{n_1<n_2\leq\frac{T}{\pi}}\frac{|\lambda_f(n_1)\lambda_f(n_2)|(\log n_1\log n_2)^m}{(n_1n_2)^\sigma n_2(\log(n_1/n_2))^2}\right)+\notag\\
&+O\left(\sum_{n<\frac{T}{\pi}}\frac{|\lambda_f(n)|^{2}(\log n)^{2m}}{n^{2\sigma-1}}\right)\notag\\
=&\begin{cases} O(T^{2(1-\sigma)}(\log T)^{2m}), & \sigma\in[1/2,1), \\ O((\log T)^{2m+2}), & \sigma=1. \end{cases} \label{I12}
\end{align}
Next we consider the case $(\mu,\nu)=(1,3)$. From (\ref{XYS}) of Lemma \ref{GIE} and (a), (b) of Lemma \ref{G6B}, the integral $I_{1,3}$ is estimated as
\begin{align}
I_{1,3}
=&\sum_{r=0}^m(-1)^m\binom{m}{r}\sum_{n_1, n_2=1}^\infty\frac{\overline{\lambda_f(n_1)}\lambda_f(n_2)(\log n_1)^m(\log n_2)^m}{n_1^\sigma n_2^{1-\sigma}}\times\notag\\
&\times\int_1^T\overline{\varphi_1\left(\frac{2\pi n_1}{t}\right)}\varphi_{01}\left(\frac{2\pi n_2}{t}\right)({n_1}{n_2})^{it}\chi_f^{(m-r)}(s)dt\notag\\
=&O\left(\sum_{r=0}^{2m}\sum_{r_1+r_2=r}\sum_{n_1\leq n_2\leq\frac{\delta_1}{2\pi}T}\frac{|\lambda_f(n_1)\lambda_f(n_2)|(\log n_1)^{r_1}(\log n_2)^{r_2}}{n_1^\sigma n_2^{1-\sigma}}\times\right.\notag\\
&\left.\times\begin{cases} |\log(T/n_2)|(\log T)^{2m-r}, & \sigma=1/2, \\ (\log n_2)^{2m-r}/n_2^{2\sigma-1}, & \sigma\in(1/2,1] \end{cases}\right)\notag\\
=&\begin{cases} O(T^{2(1-\sigma)}(\log T)^{2m}), & \sigma\in[1/2,1), \\ O((\log T)^{2m+2}), & \sigma=1. \end{cases} \label{I13}
\end{align}
In the case of $(\mu,\nu)=(1,4)$, a similar discussion to the case of $(\mu,\nu)=(1,3)$ gives that 
\begin{align}
I_{1,4}=\begin{cases} O(T^{2(1-\sigma)}(\log T)^{2m}), & \sigma\in[1/2,1), \\ O((\log T)^{2m+2}), & \sigma=1. \end{cases}\label{I14}
\end{align}
Finally we consider the case $(\mu,\nu)=(1,5)$. The formula (\ref{RAN}) and Cauchy's inequality imply that $\sum_{n\leq x}|\lambda_f(n)|=O(x)$. 
Then using the estimate (\ref{RRE}) and partial summation we get
\begin{align}
I_{1,5}&\ll\int_1^T\frac{(\log t)^{m}}{t^\sigma}\sum_{n\leq\frac{\delta_1}{2\pi}t}\frac{|\lambda_f(n)|(\log n)^{m}}{n^{\sigma}}dt\notag\\
&\ll\int_1^T\frac{(\log t)^{m}}{t^\sigma}\begin{cases}t^{1-\sigma}(\log t)^m, & \sigma\in[1/2,1), \\ (\log t)^{m+1}, & \sigma=1 \end{cases}dt\notag\\
&\ll\begin{cases} T^{2(1-\sigma)}(\log T)^{2m}, & \sigma\in[1/2,1), \\ (\log T)^{2m+2}, & \sigma=1. \end{cases} \label{I15}
\end{align}
Therefore combining (\ref{SP1})--(\ref{I15}), we complete the proof of Theorem \ref{ATH3}.  \hfill$\square$\\

%\subsection*{Acknowledgements}
%This research was partly supported by NSF (grant no. XXXX).

\end{document}